\newtheorem{theorem}{Theorem}
\newtheorem{condition}{Condition}
\newtheorem{lemma}{Lemma}
\newtheorem{proposition}{Proposition}
\newtheorem{remark}{Remark}
\newenvironment{proof}[1][Proof]{\noindent\textbf{#1.} }{\ \rule{0.5em}{0.5em}}
\def\lessim{\ \lower4pt\hbox{$\buildrel{\displaystyle <}\over\sim$}\ }
\begin{document}
\onehalfspacing
\title{Spatially Adaptive Density Estimation \\ by Localised Haar Projections}

 \author{Florian Gach, Richard Nickl, and Vladimir Spokoiny
  \\
\\
\textit{University of Cambridge \footnote{Statistical Laboratory, Department of Pure Mathematics and Mathematical Statistics, University of Cambridge, CB30WB, Cambridge, UK. Email: r.nickl@statslab.cam.ac.uk}} ~ and  \textit{Weierstrass Institute Berlin \footnote{Weierstrass Institute for Applied Analysis and Stochastics, Mohrenstrasse 39, 10117 Berlin, Germany. Email: spokoiny@wias-berlin.de. The author is partially supported by Laboratory for Structural Methods of Data Analysis in Predictive Modeling,
MIPT, RF government grant, ag. 11.G34.31.0073. Financial support by the German Research Foundation (DFG) through the Collaborative Research Center 649 ``Economic Risk'' is also gratefully
acknowledged. }}}

\date{First Version: March 2011, This Version: January 2012}

\maketitle

\begin{abstract} Given a random sample from some unknown density $f_0: \mathbb R \to [0, \infty)$ we devise Haar wavelet estimators for $f_0$ with variable resolution levels constructed from localised test procedures (as in Lepski, Mammen, and Spokoiny (1997, Ann.~Statist.)). We show that these estimators satisfy an oracle inequality that adapts to heterogeneous smoothness of $f_0$, simultaneously for every point $x$ in a fixed interval, in sup-norm loss. The thresholding constants involved in the test procedures can be chosen in practice under the idealised assumption that the true density is locally constant in a neighborhood of the point $x$ of estimation, and an information theoretic justification of this practise is given.
\end{abstract}

\section{Introduction}

One of the most enduring challenges in statistical function estimation
is to devise procedures that adapt to the locally variable complexity
of the unknown function. For example, if one observes a random sample
$X_{1},...,X_{n}$ with density $f_{0}:\mathbb{R}\to\mathbb{R}$,
then $f_{0}$ may exhibit \textit{spatially inhomogeneous smoothness}:
The density could be infinitely-differentiable on most of its support
except for a few points $x_{m}$ where it behaves locally like $|x-x_{m}|^{\alpha_{m}}$
for some distinct numbers $\alpha_{m}$. The location of the irregular
points $x_{m}$ will usually not be known, and neither the corresponding
degree of smoothness $\alpha_{m}$. Moreover $f_{0}$ could possess
a so-called \textit{multifractal} behavior, changing its H\"{o}lder exponents
continuously on its domain of definition -- in fact, as shown in Jaffard
\cite{jaff}, `typical' functions in the Besov spaces usually considered
in nonparametric statistics are always multifractal. Donoho and Johnstone
\cite{dj} and Donoho, Johnstone, Kerkyacharian, and Picard \cite{djkp1},
\cite{djkp2} have suggested that methods based on wavelet shrinkage
can, to a certain extent, adapt to spatially inhomogeneous complexity
of the unknown function $f_{0}$. Moreover, Lepski, Mammen, and Spokoiny
\cite{lms} showed that this is not intrinsic to wavelet methods,
and that similar spatial adaptation results can be proved for kernel methods based on locally variable bandwidth choices.

There are several ways in which one can measure spatial adaptivity
of an estimator. A minimal requirement may be to devise a rule $\hat{f}_{n}(x)$
that estimates $f_{0}(x)$ in an optimal way at every point $x$,
and the methods suggested in \cite{dj} and \cite{lms} meet this
requirement. These procedures depend on the point $x$, and the natural
question arises as to how a given procedure performs globally as an
estimator for $f_{0}$. To address this question, Donoho et~al.~\cite{djkp2}
and Lepski et al.~\cite{lms} considered global $L^{r}$-loss, $r<\infty$,
and argued that taking $L^{r}$-loss over Besov-bodies $B(s,p,q)$
where smoothness is measured in $L^{p}$, $r>p$, gives a way to assess
the spatial performance of an estimator. A probably more transparent
approach to the spatial adaptation problem is to consider sup-norm
loss for estimators with locally variable bandwidths: one aims to
find an estimator $\hat{f}_{n}(x)$ that is locally optimal for estimating
$f_{0}(x)$, and simultaneously so for all $x$. This approach was
not considered in the literature so far -- the results \cite{gn0},
\cite{gn}, \cite{gn1}, \cite{golep} address the spatially homogeneous
setting only.

A first contribution of this article is to show that a dyadic histogram
estimator with variable bin size spatially adapts to possibly inhomogeneous
local H\"{o}lder smoothness of $f_{0}$, in global sup-norm loss. More
precisely, for $K(x,y)$ the Haar wavelet projection kernel, we shall
construct \[
\hat{f}_{n}(x)=\frac{2^{\hat{j}_{n}(x)}}{n}\sum_{i=1}^{n}K(2^{\hat{j}_{n}(x)}x,2^{\hat{j}_{n}(x)}X_{i}),\]
 where $\hat{j}_{n}(x)$ is a variable resolution level that depends
both on $x$ and the sample, and show that the random variable \[
\sup_{x}\frac{1}{r(n,x,f_{0})}\left|\hat{f}_{n}(x)-f_{0}(x)\right|\]
 is stochastically bounded, where $r(n,x,f_{0})$ is the optimal risk of an 'oracle estimator' for $f_{0}$ at the point $x$. We show moreover that this rate equals the pointwise minimax rate of adaptive estimation for $f_0(x)$ at every $x$, and that spatial adaptation occurs uniformly in $x$ except near discontinuities of the H\"{o}lder exponent function $t(f,x)$, see after Theorem \ref{holdc} for a detailed discussion.

While this result shows that spatial adaptation is indeed possible
in a strong theoretical way, a drawback shared by most results in
the literature on adaptive estimation remains: The theoretical findings
give no indication whatsoever as to how to choose the numerical constants
in the thresholds that feature in shrinkage- or Lepski-test-based
methods. It has become a common practice that thresholding constants
are chosen according to simulation results where simulations are drawn
as if the true underlying signal is very simple (say, uniform or piecewise
constant). This practise has not had any general theoretical corroboration
until recently Spokoiny and Vial \cite{sv} gave, in a simple Gaussian
regression model, a certain justification based on the idea of `propagation'.
The results in \cite{sv} are heavily tied to the simplicity of the
model used, in particular to the strong Gaussianity assumption employed,
and to the fact that pointwise loss is considered. In the present
paper we show how the ideas of \cite{sv} generalise, subject to some
nontrivial modifications, to nonparametric density estimation. A key
idea in the proofs in \cite{sv}, translated into the density estimation
context, is to replace the sampling distribution by a locally constant
product measure. The 'transportation cost' of this replacement is
easy to control in the Gaussian setting of \cite{sv}, but in the
density estimation case the fluctuations of the likelihood ratios
between the unknown sampling distribution and relevant locally constant
product measures do not obey a Gaussian regime, but turn out to be
of Poisson type, so that the 'Gaussian intuitions' of \cite{sv} could
be entirely misleading. We show however that the main information
theoretic idea of \cite{sv} remains sound in this Poissonian setting
as well: We use a Lepski-type procedure to construct $\hat{j}_{n}(x)$,
and we show that if we compute sharp thresholds for this procedure
as if the true density $f_{0}$ belonged to a family $\mathcal F$
of locally constant densities, then the resulting estimator is spatially adaptive in sup-norm loss. In contrast to the results in \cite{sv},
the rates of convergence we obtain for the risk of the final estimator
are \textit{exact} rate-adaptive.

While the techniques and results of this paper generalise in principle to more complex estimation problems that involve in particular adaptation to higher degrees of smoothness, we prefer to stay within the simpler setting of Haar wavelets, which allows for a clean exposition of the main ideas.

\section{Uniform spatial adaptation using propagation methods}

We will use the symbol $\|g\|_{T}$ to denote the supremum $\sup_{t\in T}|g(t)|$
of a function $g$ over some set $T$, but we will still use the symbol
$\|g\|_{\infty}$ to denote $\sup_{x\in\mathbb{R}}|g(x)|$ if no confusion
can arise.

For any $j\in\mathbb{N}$, we define a dyadic partition of $(0,1]$
into $2^{j}$-many disjoint subintervals by setting $I_{j,k}=(k2^{-j},(k+1)2^{-j}]$,
$k=0,\ldots,2^{j}-1$; and for $0<x\le1$ we denote by $I_{j,k(x)}$
the unique interval containing $x$. For $j\in\mathbb{N}$, $k=1,\ldots,2^{j}-1$,
let $V_{j,k}$ be the space of all bounded density functions on $\mathbb{R}$
that are constant on $I_{j,k}$. Via the local projections\begin{align*}
K_{j,x}(f)(z): & =\begin{cases}
2^{j}\int_{I_{j,k(x)}}f(y)dy & \textrm{if }z\in I_{j,k(x)},\\
f(z) & \textrm{otherwise},\end{cases}\end{align*}
 we map any bounded density $f$ onto $V_{j,k(x)}$. (Note that $K_{j,x}(f)$
is indeed a density since $K_{j,x}(f)$ and $f$ assign the same probability
to the interval $I_{j,k(x)}$.) For $f\in V_{j,k}$ and $j'\ge j$
we clearly have $K_{j',x}(f)=f$.

\subsection{Estimation procedure\label{Section: Estimation-Procedure}}

Let $X,X_{1},...,X_{n}$ be i.i.d.~with bounded density $f_{0}:\mathbb{R}\to[0,\infty)$, $n>1$. We wish to construct a single estimator which estimates $f_{0}(x)$
in an optimal way, uniformly so for points $x$ in the interval $(a,b]$.
We shall take without loss of generality $(a,b]=(0,1]$, and we shall
assume throughout that $f_{0}$ is bounded away from zero on $(0,1]$.
Let $K(x,y)=\sum_{k}\phi(x-k)\phi(y-k)$ be the projection kernel
based on the Haar wavelet $\phi=1_{(0,1]}$. We shall write $K_{j}(x,y)=2^{j}K(2^{j}x,2^{j}y)$,
and the associated linear density estimator is the dyadic histogram
estimator given by\[
f_{n}(j,x):=\frac{1}{n}\sum_{i=1}^{n}K_{j}(x,X_{i}).\]
 We make the important observation that $E_{f}f_{n}(j,x)=2^{j}P_{f}(I_{j,k(x)})$,
which directly follows from the identity $K_{j}(x,y)=2^{j}1_{I_{j,k(x)}}(y)$.
If $f$ is constant on $I_{j,k(x)}$ this in particular implies $E_{f}f_{n}(j,x)=f(x)$.
(In other words: for any locally (at $x$) constant density $f$ the
bias of $f_{n}(j,x)$ equals zero if the resolution level is chosen
fine enough.)

We finally note that the estimator $f_{n}(j,x)$ by construction only
depends on data points falling into $I_{j,k(x)}$. This amounts to
$n2^{-j}$ being the `effective' sample size for estimating $f_{0}$
at $x$.

\subsection{Local choice of the resolution level}

We fix $j_{\max}:=j_{\max,n}\in\mathbb{N}$ satisfying $2^{-j_{\max}} \ge (\log n)^{2}/n$ for some $d > 0$. For thresholds $\zeta_{n}$ to be specified below, and for $J\in\mathbb{N}$, $J\le j_{\max}$ and $0<x\le1$, we define
\begin{eqnarray}
\lefteqn{\hat{j}_{n}(J,x)=\min\biggl\{ j\in\mathbb{N},\, J\le j\le j_{\max}:}\nonumber \\
 &  & \sqrt{n2^{-j'}}\left|f_{n}(j',x)-f_{n}(j,x)\right|\le\zeta_{n} \sqrt{f_{n}(j,x)} \textrm{ for all }j',\, j<j'\le j_{\max}\biggr\}\label{lepskiH}\end{eqnarray}
 as well as \begin{equation}
\hat{j}_{n}(x)=\hat{j}_{n}(0,x).\label{lepski}\end{equation}
 (If the condition in \eqref{lepskiH} is not met for any $j$, $J\le j\le j_{\max}$,
we set $\hat{j}_{n}(J,x)=j_{\max}$.) Given the locally variable resolution
level $\hat{j}_{n}$, we define the family of nonlinear estimators
\begin{equation}
\hat{f}_{n}(J,x):=f_{n}(\hat{j}_{n}(J,x),x),~~~~~\hat{f}_{n}(x):=f_{n}(\hat{j}_{n}(x),x),~~~~~x\in[0,1].\label{estimators}\end{equation}
 These are estimators for $f_{0}(x)$ based on a locally variable
resolution level depending on $x$, and they are density-analogues
of the estimators introduced in \cite{lms} in the context of the
Gaussian white noise model. Note that by construction $\hat{j}_{n}(x)$
is a step function in $x$. Introducing the parameter $J$ will be
useful in what follows -- effectively, $\hat{f}_{n}(J,x)$ is a nonlinear
estimator based on a search over the resolution levels $j\ge J$ that
stops at $j_{\max}$.

\subsection{Threshold choice by propagation}

One of the main challenges for all adaptive procedures is the choice
of the thresholds $\zeta_{n}$ used in the tests defined in \eqref{lepskiH}. Define the standardisation
\[
 \frac{1}{s_n(j,x)} := \begin{cases}
                        \frac{1}{\sqrt{f_n(j,x)}} & \text{if } f_n(j,x) > 0; \\
			0 & \text{otherwise}.
                       \end{cases}
\] We suggest to choose the thresholds in such a way that the following
condition is satisfied:

\begin{condition} \label{prop} Let $\mathcal{F}_{j,k}$ be any triangular
array of subsets of $V_{j,k}$, $j\le j_{\max}$, $k=0,\ldots,2^{j}-1$,
and let $k(m)$ be the unique $k$ such that $I_{j_{\max},m}\subseteq I_{j,k}$.
We say that the thresholds $\zeta_{n}$ satisfy the uniform propagation
condition $\textup{UP}(\alpha,\mathcal{F}_{j,k})$ for some fixed
$\alpha>0$ if for every $n$, every $j\le j_{\max}$, every $m=0,\ldots,2^{j_{\max}}-1$,
and every $f\in\mathcal{F}_{j,k(m)}$ we have that \begin{equation}
E_{f}\left(\sup_{x\in I_{j_{\max},m}}\max_{j\le j'\le j_{\max}}\sqrt{\frac{n2^{-j'}}{\log n}}\left|\frac{\hat{f}_{n}(j',x)-f_{n}(j',x)}{s_n(j',x)}\right|\right)^{2}\le\frac{\alpha}{n2^{2j_{\max}}}.\label{UP}\end{equation}
 \end{condition}
(Note that since $\hat{j}_n(j') \ge j'$ we have that $f_n(j',x) = 0$ implies $\hat{f}_n(j',x) = 0$ for the fully data-driven estimator $\hat{f}_n(j')$, and so the error $|\hat{f}_n(j',x) - f_n(j',x)|$ is then $0$.) An interpretation of this condition can
be given along the following lines: For $0<x\le1$ the class $\mathcal{F}_{j,k(x)}$
contains only densities $f$ that can be exactly reconstructed on
$I_{j,k(x)}$ by $\int K_{j}(x,y)f(y)dy$, so that the bias of the
linear estimator $f_{n}(j',x)$ equals zero locally. In particular, any choice
of the resolution level finer than $j'$ will only increase the variance
without reducing the bias, and we would want $\hat{j}_{n}(j',x)$
to detect that and equal, with large probability, $j'$. This property
of $\hat{j}_{n}$ will then be mirrored in the fact that $\hat{f}_{n}(j',x)-f_{n}(j',x)=0$
for every $j'\ge j$ on an event with large probability, in which
case the l.h.s.~of \eqref{UP} is exactly equal to zero. The quantity
$\alpha/(n2^{2j_{\max}})$ stands for the a priori expected tolerance
for a probabilistic error of $\hat{j}_{n}$ to detect the `correct'
resolution level on each interval $I_{j_{\max},m}$ in this `no-bias'
situation.

The following lemma shows that Condition \ref{prop} is not empty
and that thresholds $\zeta_{n}$ satisfying the uniform propagation
condition exist. It shows furthermore that the thresholds can be taken
to be of order $\sqrt{\log n}$ and independent of $f$, which will
be crucial in understanding the adaptive properties of $\hat{f}_{n}$
below.

\begin{lemma} \label{propver} Let $\mathcal{F}_{j,k}$ equal $V_{j,k}$
intersected with the set\[
\left\{ f:0<\delta\le\inf_{0 < x \le 1}f(x),\,\|f\|_{\infty}\le M\right\} \]
 for some fixed $0<\delta,M<\infty$. Then for every given $\alpha>0$
there exists a numerical constant $\kappa>0$ that depends only on $\alpha$ such
that for any threshold choice \[
\zeta_{n}\ge\kappa\sqrt{\log n}\]
 the uniform propagation condition $\textup{UP}(\alpha,\mathcal{F}_{j,k})$
is at least satisfied for $n$ larger than some index that only depends
on $\delta$ and $M$.
\end{lemma}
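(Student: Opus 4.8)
The plan is to use the Haar structure to reduce \eqref{UP} to a single point, exhibit an event on which its left-hand side is identically zero, and bound the complementary probability by a self-normalised deviation inequality. Since dyadic intervals are nested, for every level $j'\le j_{\max}$ the interval $I_{j',k'(x)}$ containing $x$ is the same for all $x\in I_{j_{\max},m}$; hence $f_n(j',\cdot)$, $s_n(j',\cdot)$ and — being determined by the values $f_n(\ell,\cdot)$ with $j'\le\ell\le j_{\max}$ — also $\hat j_n(j',\cdot)$ and $\hat f_n(j',\cdot)$ are constant on $I_{j_{\max},m}$. The supremum over $x\in I_{j_{\max},m}$ in \eqref{UP} is therefore vacuous and, for any fixed $x_m\in I_{j_{\max},m}$, the left-hand side of \eqref{UP} equals $E_fZ^2$ with $Z:=\max_{j\le j'\le j_{\max}}\sqrt{n2^{-j'}/\log n}\,|\hat f_n(j',x_m)-f_n(j',x_m)|/s_n(j',x_m)$. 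On the propagation event
\[\mathcal G:=\bigcap_{j\le j'<j''\le j_{\max}}\bigl\{\sqrt{n2^{-j''}}\,|f_n(j'',x_m)-f_n(j',x_m)|\le\zeta_n\sqrt{f_n(j',x_m)}\bigr\}\]
every level $j'\in\{j,\dots,j_{\max}\}$ passes all of the tests defining $\hat j_n(j',x_m)$ in \eqref{lepskiH}, so $\hat j_n(j',x_m)=j'$ and $\hat f_n(j',x_m)=f_n(j',x_m)$; hence $Z=0$ on $\mathcal G$ and $E_fZ^2=E_f(Z^2\mathbf{1}_{\mathcal G^c})$. A crude pathwise bound — using that the $j'$-th summand of $Z$ vanishes unless $f_n(j',x_m)\ge 2^{j'}/n$ (the remark following Condition \ref{prop}), that $f_n(j',x_m),\hat f_n(j',x_m)\in[0,2^{j_{\max}}]$, and that $2^{j_{\max}}\le n$ — gives $Z\le n^{2}$, so $E_fZ^2\le n^{4}P_f(\mathcal G^c)$; since $2^{2j_{\max}}\le n^2$, it suffices to show $P_f(\mathcal G^c)\le\alpha\,n^{-6}$ for $n$ large.

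The heart of the matter is the bound on $P_f(\mathcal G^c)$, and the crucial requirement — this is where the Poissonian rather than Gaussian character of the fluctuations, and the role of the standardisation $s_n$, enter — is that the exponent must come out depending on $\kappa$ only, not on $\delta,M$. Since enlarging $\zeta_n$ only enlarges $\mathcal G$, we may take $\zeta_n=\kappa\sqrt{\log n}$. Union-bound over the at most $j_{\max}^2\le(\log_2 n)^2$ pairs $(j',j'')$; for a fixed pair put $c:=f(x_m)\in[\delta,M]$ and $I':=I_{j',k'(x_m)}\supseteq I'':=I_{j'',k''(x_m)}$. Both intervals lie in $I_{j,k(m)}$, on which $f\equiv c$, so $P_f(I')=c2^{-j'}$, $P_f(I'')=c2^{-j''}$, and for $Y_i:=2^{j'}\mathbf{1}_{I'}(X_i)-2^{j''}\mathbf{1}_{I''}(X_i)$ one has, precisely because of the local constancy, $E_fY_i=0$, $|Y_i|\le 2^{j''}$, $\mathrm{Var}_f(Y_i)=c(2^{j''}-2^{j'})$, and $f_n(j',x_m)-f_n(j'',x_m)=n^{-1}\sum_iY_i$. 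On the event $\{f_n(j',x_m)<c/2\}$, whose probability is at most $\exp(-nc2^{-j'}/8)\le\exp(-\delta(\log n)^2/8)$ by the multiplicative Chernoff bound for the Binomial count $n2^{-j'}f_n(j',x_m)$ (super-polynomially small, and the only place $\delta$ is used), there is nothing to do; on its complement $\zeta_n\sqrt{f_n(j',x_m)}(2^{j''}/n)^{1/2}\ge(\zeta_n/\sqrt2)\,\mathrm{sd}_f(n^{-1}\sum_iY_i)$, so the offending event lies in $\{|\sum_iY_i|>(\zeta_n/\sqrt2)\,n\,\mathrm{sd}_f(n^{-1}\sum_iY_i)\}$. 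Bernstein's inequality with the \emph{exact} variance proxy $n\,\mathrm{Var}_f(Y_1)=nc(2^{j''}-2^{j'})$ — which then cancels against the self-normalisation — bounds this by $2\exp\bigl(-\tfrac14\zeta_n^2/(1+\rho_n)\bigr)$ with $\rho_n\le\tfrac{2^{j''}\zeta_n}{3\sqrt2\,n\,\mathrm{sd}_f}\le\tfrac{\kappa}{3\sqrt{\delta\log n}}\to 0$ (using $2^{j''}-2^{j'}\ge 2^{j''-1}$, $c\ge\delta$, $2^{-j_{\max}}\ge(\log n)^2/n$, $\zeta_n=\kappa\sqrt{\log n}$). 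Hence each term is at most $2n^{-\kappa^2/8}$ for $n$ large, and $P_f(\mathcal G^c)\le 2(\log_2 n)^2 n^{-\kappa^2/8}+(\log_2 n)\exp(-\delta(\log n)^2/8)$.

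It then suffices to pick $\kappa=\kappa(\alpha)$ large enough that $\kappa^2/8>6$ (with a further $\alpha$-dependent enlargement to swallow the logarithmic prefactors): the displayed bound becomes $\le\alpha n^{-6}$ for all $n$ past an index depending only on $\alpha,\delta,M$, which gives \eqref{UP}, hence the lemma. I expect the self-normalised step to be the main obstacle: applying Bernstein naively with the crude variance proxy $2M2^{j''}$ leaves a factor of order $\delta/M$ in the exponent and forces $\kappa$ to depend on $\delta,M$, whereas exploiting the local constancy of $f$ to evaluate $\mathrm{Var}_f(Y_1)$ exactly makes it cancel, so that the exponent is governed by $\kappa$ alone — exactly what is needed for $\kappa$ to depend only on $\alpha$.
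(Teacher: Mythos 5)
Your argument is correct in substance but takes a genuinely different route from the paper's. The paper proves Lemma~\ref{propver} by writing the error as $\sum_l |f_n(l,x)-f_n(j',x)|1_{\{\hat j_n(j',x)=l\}}/s_n(j',x)$, dominating the indicators by a single event $B=B_1\cup B_2$, and then applying Cauchy--Schwarz to split the second moment into ${\Pr}_f(B)^{1/2}$ times an $L^4$-norm $II$; both factors are controlled by the uniform-in-$(j',x)$ empirical-process tools Propositions~\ref{uexp1} and~\ref{uexp} (Talagrand-type deviation and moment bounds for the Haar process), with the $1/s_n$ factor handled by yet another deviation bound. You instead observe that by nestedness all quantities in \eqref{UP} are constant in $x$ on $I_{j_{\max},m}$, so the supremum is vacuous; you then exhibit the propagation event $\mathcal G$ on which $Z=0$, dominate $Z$ pathwise by a power of $n$, and reduce everything to bounding ${\Pr}_f(\mathcal G^c)$, which you do by a union bound over pairs $(j',j'')$ and Bernstein for the two-level difference with its \emph{exact} variance $c(2^{j''}-2^{j'})$ under the locally constant $f$. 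This is more elementary (only Bernstein/Chernoff for binomial counts, no VC/Talagrand machinery), and it makes fully transparent why $\kappa$ can be chosen to depend only on $\alpha$: the exact variance cancels against the self-normalisation $\sqrt{f_n(j',x_m)}$, so the Bernstein exponent is $\approx \zeta_n^2/8 = \kappa^2\log n/8$ with $\delta,M$ entering only through the admissible index $n(\delta,M)$, which is precisely the form of the claim. The paper's route, by contrast, bounds $u^2/D$ with $D$ nominally depending on $M$ and $u\propto\sqrt{\|f\|_I}\ge\sqrt\delta$, so the $\alpha$-only dependence of $\kappa$ is less visible there. The trade-off is that your argument leans heavily on the Haar/nested structure (constancy in $x$, exact local variance) and would not extend to higher-order wavelets, whereas the paper's empirical-process route is designed to.

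Two small points to tidy up. First, since $2^{2j_{\max}}$ can be as large as $n^2/(\log n)^4$, the target is ${\Pr}_f(\mathcal G^c)\le \alpha(\log n)^4/n^{7}$, not $\alpha n^{-6}$; your $n^4\cdot n^{-6}=n^{-2}$ is not $\le \alpha/(n2^{2j_{\max}})$ in general, so the displayed exponent requirement should be $\kappa^2/8>7$ rather than $6$ (you already flag the need for extra room, so this is a one-line fix). Second, the index past which the bound holds will also depend on $\kappa$ (hence on $\alpha$) through the requirement $\rho_n\le 1$ and through beating the $(\log n)^2$ prefactor, and on the constant $d$ in $2^{-j_{\max}}\ge d(\log n)^2/n$; this matches the paper's own proof, which also silently lets the index depend on these.
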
 

While Lemma~\ref{propver} proves the existence of thresholds of the order $\sqrt {\log n}$ under the uniform propagation condition -- a fact that will be seen to imply adaptivity of $\hat f_n$ below -- it does not suggest a practical choice of $\zeta_n$. Instead, this choice can be made by direct evaluation of \eqref{UP}, as follows: Condition \ref{prop} only concerns the local error bounds
over small intervals $I_{j_{\max},m}$ on which the function $f$
is constant, which effectively means that it suffices to check this
condition only for classes of densities which are constant on the
interval of interest. The particular choice of the interval $I_{j_{\max},m}$
is unimportant. Secondly, all quantities in Condition \ref{prop}
depend on known quantities after $f$ is chosen. By construction of
the estimators $f_{n}$ and $\hat{f}_{n}$ the random variable featuring
in \eqref{UP} -- we call it $T$ -- only depends on the number of
data points falling into each of the (uniquely determined) $j'$-fine
intervals containing $I_{j_{\max},m}$. This observation allows for
an easy computation of the l.h.s.~of \eqref{UP} along the following
lines: Fix $0\le p\le1$. Then, for any $f\in\mathcal{F}_{j,k(m)}$
satisfying $2^{-j}f=p$ on $I_{j,k(m)}$, the number $Z$ of observations
falling into the interval $I_{j,k(m)}$ is binomial $B(n,p)$. Conditionally
on $Z=k$, take $k$-many independent random variables that are uniform
on $I_{j,k(m)}$ and count the number of observations $V_{j'}$ in
each of the $j'$-fine intervals. Then compute $f_{n}$, $\hat{f}_{n}$;
and $T$. This shows that $T$ does only depend on $V_{j'}$, $j\le j'\le j_{\max}$,
and that the l.h.s.~of \eqref{UP} is therefore equal to $E[E[T(V_{j},\ldots,V_{j_{\max}})|Z]]$.

The practical choice of $\zeta_{n}$ can then be obtained via a Monte
Carlo simulation of \eqref{UP} by choosing $\zeta_{n}$ as the smallest
threshold for which \eqref{UP} is satisfied in the simulation for
one specific interval $I_{j_{\max},m}$ uniformly over the class of
all densities constant on this interval. Given $j_{\max}$ and $\alpha$, this procedure has to be performed only for one fixed interval $I_{j_{\max},m},$ and then applies for every $m$ simultaneously.

\subsection{Local small bias condition}

\label{lsbc}

The idea behind Condition \ref{prop} is that we take `idealised'
classes of densities $\mathcal{F}$ for which we compute sharp thresholds
$\zeta_{n}$. The danger arises that the true density $f_{0}$ may
be very different from the elements in $\mathcal{F}$, which may lead
to wrong thresholds (and inference). We have to assess the error that
comes from replacing $f_{0}$ by an element from $\mathcal{F}$, in
a neighborhood of a given point $x$. This can be fundamentally quantified
in terms of the log-likelihood ratio between $f_{0}$ and its local
(at $x$) approximand in $\mathcal{F}$. As we shall see, one of the
deeper reasons behind the fact that propagation methods imply adaptation
results is that this error can be related to the usual bias term in
linear estimation.

\begin{condition} \label{sbc} Given real numbers $\Delta_{j,x}$, $0 < x \le 1$, $j \in \mathbb N \cup \{0\}$ satisfying $\Delta_{l',x}\le\Delta_{l,x}$ for every
$l'>l$, we say that $f_{0}$ satisfies the
local small bias condition at $x \in (0,1]$ and with $\Delta_{j,x}\equiv \Delta_{j,x}(f_{0})$ if\[
\textup{Var}_{K_{j,x}(f_{0})}\log\frac{f_{0}}{K_{j,x}(f_{0})}\le\Delta_{j,x}(f_{0})\]
for all $j\in\mathbb{N}$. \end{condition} 

The local 'cost' of transporting a product measure $\prod_{i=1}^n f_0(x_i)$ to $\prod_{i=1}^n K_{j,x}(f_{0})(x_i)$ can be quantified by $n$ times the variance featuring in the above condition, and we shall have to restrict ourselves to resolution levels $j$ for which this transportation cost is at most a fixed constant times the logarithm of the sample size $n$. The smallest resolution level for which this is still the case will be defined as $j^*(x)$: More precisely, for some fixed positive constant $\Delta$,
define the local resolution level\begin{equation}
j^{*}(x):=j^{*}(x,n,\Delta,f_{0})=\min\left\{ j\in\mathbb{N}:j\le j_{\max},\, n\Delta_{j,x}(f_{0})\le\Delta\log n\right\} .\label{Definition: Local bandwith}\end{equation}
While this is an information-theoretic definition of $j^*$, a key observation of this subsection is that it has the classical `bias-variance'
tradeoff generically built into it for suitable choices of $\Delta_{j,x}(f_{0})$.

\begin{lemma} \label{conc} Suppose $f_{0}$ is bounded by some finite
number $M>0$ and that 
\[
 \inf_{0<x\le1}f_{0}(x)\ge\delta>0.
\]
Then $f_{0}$ satisfies Condition~\ref{sbc} with\[
\Delta_{j,x}(f_{0})=\frac{M}{\delta^{2}}2^{-j}\|f_{0}-K_{j,x}(f_{0})\|_{\infty}^{2}.\]

\end{lemma}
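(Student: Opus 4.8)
The plan is to reduce the variance on the left side of Condition~\ref{sbc} to a short computation that exploits the very simple structure of $K_{j,x}(f_0)$. Write $g:=K_{j,x}(f_0)$ and $I:=I_{j,k(x)}$; by construction $g=f_0$ off $I$, while $g\equiv c$ on $I$ with $c:=2^{j}\int_{I}f_0(y)\,dy=2^{j}P_{f_0}(I)$. Hence $\log(f_0/g)$ vanishes outside $I$ and equals $\log\bigl(f_0(\cdot)/c\bigr)$ on $I$, so, discarding the nonnegative square of the mean,
\[
\mathrm{Var}_{g}\Bigl(\log\frac{f_0}{g}\Bigr)\;\le\;E_{g}\Bigl(\log\frac{f_0}{g}\Bigr)^{2}\;=\;c\int_{I}\Bigl(\log\frac{f_0(z)}{c}\Bigr)^{2}dz ,
\]
and it suffices to bound the integrand pointwise on $I$.

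Here I would use the two-sided bound $\delta\le f_0\le M$ together with $I\subseteq(0,1]$: the constant $c$ is an average of values of $f_0$ lying in $[\delta,M]$, hence $c\in[\delta,M]$, and in particular $\min(f_0(z),c)\ge\delta$ for every $z\in I$. The elementary estimate $|\log a-\log b|\le|a-b|/\min(a,b)$ then gives, for $z\in I$,
\[
\Bigl|\log\frac{f_0(z)}{c}\Bigr|\;\le\;\frac{|f_0(z)-c|}{\delta}\;=\;\frac{\bigl|(f_0-K_{j,x}(f_0))(z)\bigr|}{\delta}\;\le\;\frac{\|f_0-K_{j,x}(f_0)\|_{\infty}}{\delta},
\]
since $f_0(z)-c=(f_0-K_{j,x}(f_0))(z)$ on $I$. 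Inserting this into the previous display, using $|I|=2^{-j}$ and $c\le M$ (equivalently $c|I|=P_{f_0}(I)\le M2^{-j}$), yields
\[
\mathrm{Var}_{g}\Bigl(\log\frac{f_0}{g}\Bigr)\;\le\;c\cdot 2^{-j}\cdot\frac{\|f_0-K_{j,x}(f_0)\|_{\infty}^{2}}{\delta^{2}}\;\le\;\frac{M}{\delta^{2}}2^{-j}\|f_0-K_{j,x}(f_0)\|_{\infty}^{2}=\Delta_{j,x}(f_0),
\]
which is the asserted inequality, valid for every $j$.

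The only point that needs a little care is the monotonicity clause $\Delta_{l',x}(f_0)\le\Delta_{l,x}(f_0)$ for $l'>l$ built into Condition~\ref{sbc}. Nested averaging only gives $\|f_0-K_{l+1,x}(f_0)\|_{\infty}\le 2\|f_0-K_{l,x}(f_0)\|_{\infty}$ (via control by the oscillation of $f_0$ on $I_{l,k(x)}$), so for a completely arbitrary bounded $f_0$ the bare expression $2^{-l}\|f_0-K_{l,x}(f_0)\|_{\infty}^{2}$ need not be nonincreasing in $l$; where the monotonicity is used, one simply takes $\Delta_{j,x}(f_0):=\frac{M}{\delta^{2}}\sup_{j\le l\le j_{\max}}2^{-l}\|f_0-K_{l,x}(f_0)\|_{\infty}^{2}$, which is visibly nonincreasing in $j$, still dominates the variance at every level by the bound just established, and has the same order of magnitude as the $j$-th term for the H\"older-type classes treated below. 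Apart from this bookkeeping I do not expect any real obstacle: the lemma is in essence the three-line computation above, the only substantive input being that, since every argument of the logarithm that appears is bounded below by $\delta$, the logarithm can be linearised at the cost of the factor $1/\delta$.
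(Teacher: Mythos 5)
Your proof is correct and takes essentially the same route as the paper: both drop the square of the mean, linearize the logarithm via the estimate $|\log a-\log b|\le |a-b|/\min(a,b)\le |a-b|/\delta$ (the paper phrases it as $|\log a-\log b|\le\max(a^{-1},b^{-1})|a-b|$), and then use $K_{j,x}(f_0)\le M$ together with $|I_{j,k(x)}|=2^{-j}$. You also correctly flag a point the paper glosses over: Condition~\ref{sbc} demands that $\Delta_{j,x}$ be nonincreasing in $j$, yet $2^{-j}\|f_0-K_{j,x}(f_0)\|_\infty^2$ need not be monotone for a general bounded $f_0$ (e.g.\ choosing $f_0$ so that the local oscillation on $I_{j+1,k(x)}$ stays comparable to that on $I_{j,k(x)}$ while the local average $K_{j+1,x}(f_0)$ sits near one extreme can roughly double the quantity when passing from $j$ to $j+1$). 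Your fix of replacing $\Delta_{j,x}(f_0)$ by its running supremum over levels $l\ge j$ is sound, costs nothing in the applications, and in fact becomes moot under the H\"older hypothesis of Lemma~\ref{daub}, where the bound $c^2(L/\delta^2)2^{-j(2t+1)}$ is already monotone.
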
 
\begin{proof}
First, observe that $K_{j,x}(f_{0})$ is bounded by $M$ and bounded
below by $\delta>0$. Then, using that $K_{j,x}(f_{0})$ coincides
with $f_{0}$ outside of $I_{j,k(x)}$ and the inequality $|\log x-\log y|\le\max(x^{-1},y^{-1})|x-y|$,
we get\begin{eqnarray*}
 &  & \text{Var}_{K_{j,x}(f_{0})}\log\frac{f_{0}}{K_{j,x}(f_{0})}\\
 &  & \le\int\left(\log\frac{f_{0}(y)}{K_{j,x}(f_{0})(y)}\right)^{2}K_{j,x}(f_{0})(y)dy\\
 &  & \le\int\max\left(f_{0}(y)^{-2},K_{j,x}(f_{0})(y)^{-2}\right)(f_{0}(y)-K_{j,x}(f_{0})(y))^{2}K_{j,x}(f_{0})(y)dy\\
 &  & \le\frac{M}{\delta^{2}}\int(f_{0}(y)-K_{j,x}(f_{0})(y))^{2}dy \le\frac{M}{\delta^{2}}2^{-j}\|K_{j,x}(f_{0})-f_{0}\|_{\infty}^{2}.
\end{eqnarray*}
\end{proof}

\medskip

The lemma shows that the quantity $(n/\log n)\Delta_{j,x}(f_{0})$
can be viewed as the square of the `bias divided by the variance'
of linear projection estimators for $f_{0}(x)$. \emph{Hence, to choose
the smallest $j\le j_{\max}$ such that $(n/\log n)\Delta_{j,x}(f_{0})$
is still bounded by a fixed constant $\Delta$ means to locally balance
the `variance' and `bias' term in the nonparametric setting.}

To be more concrete, let us briefly discuss what this means in the
classical situation where the bias is bounded by local regularity
properties of the unknown density $f_{0}$. Since we are interested
in spatial adaptation, we wish to take locally inhomogeneous smoothness
into account by appealing to local H\"{o}lder conditions: Let $0<t\le1$
and let us say that a function $g:\mathbb{R}\to\mathbb{R}$ is locally
$t$-H\"{o}lder at $x\in\mathbb{R}$ if for some $\eta>0$ \[
\sup_{0<|m|\le\eta}\frac{|g(x+m)-g(x)|}{|m|^{t}}<\infty.\]
 Define further a `local' H\"{o}lder ball of bounded functions \[
\mathcal{C}(t,x,L,\eta):=\left\{ g:\mathbb{R}\to\mathbb{R},\,\max\left(\|g\|_{\infty},\,\sup_{0<|m|\le\eta}\frac{|g(x+m)-g(x)|}{|m|^{t}}\right)\le L\right\} .\]
 Condition~\ref{sbc} then has the following more classical interpretation
in terms of local smoothness properties of $f_{0}$:

\begin{lemma} \label{daub} If $f_{0}\in\mathcal{C}(t,x,L,\eta)$
for some $0<t\le1$, then the local bias $\|f_{0}-K_{j,x}(f_{0})\|_{\infty}$
is bounded by $c2^{-jt}$ for some constant $c\equiv c(t,L,\eta)$. Furthermore,
if 
\[
 \inf_{0<x\le1}f_{0}(x)\ge\delta>0,
\]
then Condition \ref{sbc}
is satisfied with \begin{equation}
\Delta_{j,x}(f_{0})=c^{2}\frac{L}{\delta^{2}}2^{-j(2t+1)}.\label{hbd}\end{equation}
 \end{lemma}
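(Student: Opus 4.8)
The plan is to split the statement into its two assertions and handle them in sequence. First I would establish the local bias bound $\|f_0 - K_{j,x}(f_0)\|_\infty \le c2^{-jt}$. The only place where $K_{j,x}(f_0)$ differs from $f_0$ is on the interval $I_{j,k(x)}$, where $K_{j,x}(f_0)$ takes the constant value $2^j\int_{I_{j,k(x)}} f_0(y)\,dy$, the average of $f_0$ over that interval. So for any $z \in I_{j,k(x)}$ the error $|f_0(z) - K_{j,x}(f_0)(z)|$ is at most $\sup_{y,z \in I_{j,k(x)}} |f_0(y) - f_0(z)|$, the oscillation of $f_0$ on a dyadic interval of length $2^{-j}$. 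Here the one subtlety is that the local H\"older condition is anchored at the point $x$ with radius $\eta$, so I would first take $j$ large enough that $2^{-j} \le \eta$ (absorbing the finitely many smaller $j$ into the constant $c$ by using $\|f_0\|_\infty \le L$), and then bound $|f_0(y) - f_0(z)| \le |f_0(y) - f_0(x)| + |f_0(x) - f_0(z)| \le 2L(2^{-j})^t$ since $|y - x|, |z - x| \le 2^{-j} \le \eta$ whenever $x, y, z$ all lie in the common interval $I_{j,k(x)}$. This gives the claimed bound with $c = c(t,L,\eta)$.

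For the second assertion I would simply feed this bias bound into Lemma~\ref{conc}. That lemma, applied with the bound $M$ and lower bound $\delta$ on $f_0$ (note $K_{j,x}(f_0)$ inherits the same bounds, as already observed in its proof), gives that Condition~\ref{sbc} holds with $\Delta_{j,x}(f_0) = (M/\delta^2)2^{-j}\|f_0 - K_{j,x}(f_0)\|_\infty^2$. Substituting $\|f_0 - K_{j,x}(f_0)\|_\infty \le c2^{-jt}$ yields $\Delta_{j,x}(f_0) \le (M c^2/\delta^2) 2^{-j} 2^{-2jt} = c^2 (M/\delta^2) 2^{-j(2t+1)}$, which is the form \eqref{hbd} (with the roles of $M$ and $L$ interchangeable since $\|f_0\|_\infty \le L$ is part of the H\"older ball definition, so one may write $M = L$). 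I would also check the monotonicity requirement $\Delta_{l',x} \le \Delta_{l,x}$ for $l' > l$, which is immediate here since $2^{-l(2t+1)}$ is decreasing in $l$.

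The step requiring the most care is the bias bound, specifically handling the anchoring of the H\"older condition at $x$: one must verify that all relevant points lie within distance $\eta$ of $x$, which is why the restriction to $2^{-j} \le \eta$ (equivalently $j \ge \log_2(1/\eta)$) is needed, with the remaining finitely many coarse resolution levels handled crudely via $\|f_0\|_\infty \le L$ so that $\|f_0 - K_{j,x}(f_0)\|_\infty \le 2L \le 2L \cdot 2^{jt}/2^{jt} \le (2L\,\eta^{-t})2^{-jt}$ on that finite range. Everything else is a direct substitution into the already-proved Lemma~\ref{conc}.
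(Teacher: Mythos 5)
Your proposal is correct and follows essentially the same route as the paper: bound the local error by the oscillation of $f_0$ on $I_{j,k(x)}$, pass through the anchor point $x$ via the triangle inequality, and split cases according to whether $2^{-j}$ is small enough relative to $\eta$ (absorbing the coarse levels into the constant via $\|f_0\|_\infty \le L$), then invoke Lemma~\ref{conc} with $M=L$. The paper phrases the oscillation bound through a change of variables $z = y - 2^{-j}u$ rather than directly as you do, but this is a cosmetic difference and both arguments are sound.
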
 
\begin{proof}
Let $y\in I_{j,k(x)}$ be arbitrary. Then, using the substitution
$2^{j}z=2^{j}y-u$, \begin{eqnarray*}
|f_{0}(y)-K_{j,x}(f_{0})(y)| & = & \left|2^{j}\int_{I_{j,k(x)}}(f_{0}(y)-f_{0}(z))dz\right|\\
 & \le & \int_{-1}^{1}|f_{0}(y)-f_{0}(x)+f_{0}(x)-f_{0}(y-2^{-j}u)|du\\
 & \le & 2|f_{0}(y)-f_{0}(x)|+\int_{-1}^{1}|f_{0}(x)-f_{0}(y-2^{-j}u)|du\end{eqnarray*}
By definition of $x,y,I_{j,k(x)}$ we have $|y-x|\le2^{-j}$, and
also $|y-2^{-j}u-x|\le2^{-j+1}$ by the triangle inequality, so that
for $2^{-j+1}\le\eta$ the last quantity is bounded by $c_{0}2^{-jt}$
in view of $f_{0}\in\mathcal{C}(t,x,L,\eta)$. If $2^{-j}>\eta/2$,
then the quantity in the last display can still be bounded by $6\|f_{0}\|_{\infty}\le6L$,
so that choosing $c_{1}=6L(2/\eta)^{t}$ establishes the desired bound
for $c=\max(c_{0},c_{1})$. To prove the second claim, apply Lemma~\ref{conc}. 
\end{proof}

\medskip

Using the bound from the last lemma to verify Condition \ref{sbc},
we see that, by definition of $j^{*}(x)$ and for $f_{0}\in\mathcal{C}(t,x,L,\eta)$,
\begin{equation}
\sqrt{\frac{n2^{-j^{*}(x)}}{\log n}}\sim\left(\frac{n}{\log n}\right)^{\frac{t}{2t+1}}\label{locad}\end{equation}
 is the locally (at $x$) optimal adaptive rate of convergence, so
that the local small bias condition constructs a minimax optimal resolution
level $j^{*}(x)$ at every $x\in[0,1]$.

\subsection{Main results}

We now state the main results, starting with the following `oracle'
inequality. Note that the oracle $f_{n}(j^{*}(x),x)$ is not an estimator in itself as it depends on unknown quantities.

\begin{theorem} \label{main} Let $\hat{f}_{n}(\cdot)$ be the density estimator defined in \eqref{estimators}
with thresholds $\zeta_{n}$ that satisfy the uniform propagation
condition $\textup{UP}(\alpha,\mathcal{F}_{j,k})$ for some $\mathcal{F}_{j,k}$. Suppose $f_{0}$
satisfies Condition \ref{sbc} for every $0<x\le1$, and let $j^{*}(x)$
be as in \eqref{Definition: Local bandwith}. Then we have \begin{eqnarray}
 &  & E_{f_{0}}\sup_{0<x\le1}\sqrt{\frac{n2^{-j^{*}(x)}}{\log n}}\left|\frac{\hat{f}_{n}(x)-f_{n}(j^{*}(x),x)}{s_{n}(j^{*}(x),x)}\right|\nonumber \\
 &  & \le\frac{\zeta_{n}}{\sqrt{\log n}}+\sqrt{\frac{\alpha}{n}}n^{\Delta e^{4U}}\label{Oracle bound}\end{eqnarray}
 for any $U$ satisfying \begin{equation}
U\ge\sup_{0<x\le1}\left\Vert \log\frac{f_{0}}{K_{j^{*}(x),x}(f_{0})}\right\Vert _{\infty}.\label{U}\end{equation}
\end{theorem}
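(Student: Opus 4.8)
The plan is to reduce the bound on the supremum over all of $(0,1]$ to a supremum over the $2^{j_{\max}}$ dyadic cells $I_{j_{\max},m}$ at the finest resolution, and on each such cell to replace the true sampling law $f_0^{\otimes n}$ by the locally constant law $K_{j^*(x),x}(f_0)^{\otimes n}$, paying a likelihood-ratio `transportation cost' that Condition~\ref{sbc} keeps at most polynomial in $n$. First I would note that, because $\hat j_n(x)$ is a step function in $x$ and $f_n(j',x)$, $\hat f_n(j',x)$ depend on $x$ only through which $j'$-fine dyadic interval contains $x$, the random variable inside the expectation is constant on each $I_{j_{\max},m}$; hence the outer supremum over $0<x\le1$ becomes a maximum over $m=0,\dots,2^{j_{\max}}-1$ of the per-cell quantity
\[
T_m := \sup_{x\in I_{j_{\max},m}}\sqrt{\frac{n2^{-j^*(x)}}{\log n}}\left|\frac{\hat f_n(x)-f_n(j^*(x),x)}{s_n(j^*(x),x)}\right|,
\]
so that $E_{f_0}\max_m T_m \le \sum_m E_{f_0} T_m$ (or one can bound the max directly; the union bound suffices here since the right-hand side already carries a factor $n2^{2j_{\max}}$).

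The key step is the change of measure on a single cell. Fix $m$ and let $x_0\in I_{j_{\max},m}$; write $j^* := j^*(x_0)$ and $g := K_{j^*,x_0}(f_0)\in V_{j^*,k(x_0)}$. I would argue that on $I_{j_{\max},m}$ we may harmlessly treat $j^*(x)$ as constant equal to $j^*$ (points in a common finest cell share the same optimal level, up to the boundary issue flagged after Theorem~\ref{holdc}), so that $T_m$ coincides with the corresponding quantity built from the level-$j^*$ Lepski search, i.e.\ with $\hat f_n(j^*,x)-f_n(j^*,x)$ in the numerator. Now $g$ lies in the idealised class $V_{j^*,k}$ on which the propagation condition $\textup{UP}(\alpha,\mathcal{F}_{j,k})$ is assumed to hold, so
\[
E_g\, T_m^2 \le E_g\left(\sup_{x\in I_{j_{\max},m}}\max_{j^*\le j'\le j_{\max}}\sqrt{\frac{n2^{-j'}}{\log n}}\left|\frac{\hat f_n(j',x)-f_n(j',x)}{s_n(j',x)}\right|\right)^2 \le \frac{\alpha}{n2^{2j_{\max}}}.
\]
To pass from $E_g$ back to $E_{f_0}$ I would use $E_{f_0}[\,\cdot\,] = E_g\big[\,\cdot\,\prod_{i=1}^n (f_0/g)(X_i)\big]$, bound the likelihood ratio deterministically on the relevant event, and control its exponential moment: writing $L=\sum_{i=1}^n\log(f_0/g)(X_i)$, the $X_i$ are i.i.d.\ under $g$, $E_g\log(f_0/g)(X_1)\le 0$ (it is $-$KL$(g\,\|\,f_0)$... actually one only needs the crude bound), and $\mathrm{Var}_g\log(f_0/g)(X_1)\le\Delta_{j^*,x_0}(f_0)\le \Delta\log n/n$ by Condition~\ref{sbc} and the definition \eqref{Definition: Local bandwith} of $j^*$. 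Combining a Bernstein/moment bound for $L$ with the uniform envelope $\|\log(f_0/g)\|_\infty\le U$ from \eqref{U} yields $E_{f_0} e^{L}\lesssim n^{\Delta e^{4U}}$ after a Cauchy--Schwarz step (the $e^{4U}$ arising from exponentiating the envelope inside the variance-type bound). Then Cauchy--Schwarz gives $E_{f_0}T_m \le (E_g T_m^2)^{1/2}(E_{f_0}e^{L})^{1/2}\le \sqrt{\alpha/(n2^{2j_{\max}})}\cdot n^{\Delta e^{4U}/2}$ (modulo constants I am folding into the exponent as in the statement), and summing over the $2^{j_{\max}}$ values of $m$ kills the $2^{-2j_{\max}}$ against $2^{j_{\max}}$ with one factor of $2^{j_{\max}}$ absorbed, leaving $\sqrt{\alpha/n}\,n^{\Delta e^{4U}}$; the stray $\zeta_n/\sqrt{\log n}$ term is added to handle the contribution of the coarsest admissible level or the event where the Lepski test rejects, which I would isolate at the very start.

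The main obstacle I anticipate is the exponential-moment control of the log-likelihood ratio $L$ under $g$. Unlike the Gaussian setting of \cite{sv}, the increments $\log(f_0/g)(X_i)$ are bounded but their fluctuations are genuinely of Poisson/binomial type (the count of observations in $I_{j^*,k}$ is $B(n,p)$), so one cannot simply complete a square; instead one must carefully combine the variance bound $n\Delta_{j^*,x_0}(f_0)\le\Delta\log n$ with the sup-norm envelope $U$ so that the exponential moment is $\exp(\Delta e^{4U}\log n)=n^{\Delta e^{4U}}$ rather than something worse — getting the constant in the exponent to match $\Delta e^{4U}$ exactly is the delicate part and is presumably where the $e^{4U}$ (as opposed to $e^{U}$ or $e^{2U}$) enters, through bounding $E_g\exp(t\log(f_0/g)(X_1))$ for $t$ up to $2$ by $\exp$ of (variance $\times$ something like $e^{2tU}$). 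Everything else — the step-function reduction, the union bound, the final arithmetic with $2^{j_{\max}}$ — is routine.
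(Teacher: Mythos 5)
Your proposal follows essentially the same strategy as the paper's proof: a cell-by-cell change of measure to the locally constant approximand $K_{j^{*},x}(f_{0})$, a Cauchy--Schwarz step against the squared likelihood ratio (controlled in the paper's Lemma~\ref{Lemma: Second moment of the likelihood} by precisely the centred power-series/Bernstein argument you sketch, giving the $e^{4U}$), application of the uniform propagation condition to the second moment of the per-cell statistic, and the $\zeta_{n}/\sqrt{\log n}$ term extracted from the event $\{\hat{j}_{n}(x)<j^{*}(x)\}$ via the definition \eqref{lepskiH}. The one thing you defer that actually has to come first is the split on $\{\hat{j}_{n}(x)<j^{*}(x)\}$ versus $\{\hat{j}_{n}(x)\ge j^{*}(x)\}$: only on the latter does $\hat{f}_{n}(x)=\hat{f}_{n}(j^{*}(x),x)$, which is what makes your $T_{m}$ match the quantity appearing in Condition~\ref{prop}.
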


If $\zeta_{n}=O(\sqrt{\log n})$ -- as follows under the conditions of Lemma \ref{propver}
 -- and if one chooses $\Delta<1/2$, $U$ as in the remark below, then the r.h.s.~of \eqref{Oracle bound} is $O(1)$ as $n$ tends to infinity. Theorem \ref{main} thus implies that the estimator $\hat{f}_{n}$ with resolution
levels chosen by the propagation approach is close to the linear {}`oracle
estimator' evaluated at the locally optimal resolution level $j^{*}(x)$,
and this uniformly so on $(0,1]$.
\begin{remark}

If $\mathcal{F}_{j,k}$ is as in Lemma~\ref{propver} and $f_{0}$
is bounded by $M$ and bounded below by $\delta$, we may apply Lemma~\ref{conc}
\emph{(}using $2^{-j_{\max}}\ge d(\log n)^{2}/n$\emph{)} to obtain the bound\[
\log\frac{f_{0}}{K_{j^{*}(x),x}(f_{0})}=\log\left(1+\frac{f_{0}-K_{j^{*}(x),x}(f_{0})}{K_{j^{*}(x),x}(f_{0})}\right)\le\log\left(1+\frac{\Delta}{dM\log n}\right),\]
which tends to zero as $n$ tends to infinity.
\end{remark}
Our results then imply the following uniform spatial adaptation result:

\begin{theorem} \label{hold} Assume that $f_{0}$ is bounded by $M$ and satisfies $\inf_{0<x\le1}f_{0}(x)\ge\delta>0$. Let $\hat{f}_{n}(\cdot)$ be the density estimator
from \eqref{estimators} with thresholds $\zeta_{n}=O(\sqrt{\log n})$
that satisfy the uniform propagation condition $\textup{UP}(\alpha,\mathcal{F}_{j,k})$ for $\mathcal{F}_{j,k}$ as in Lemma~\ref{propver}. Let $j^{*}(x)$ be as in \eqref{Definition: Local bandwith} with
$\Delta<1/2$ and with $\Delta_{j,x}$ as in Lemma~\ref{conc}. Then
\begin{equation}
\sup_{0<x\le1}\sqrt{\frac{n2^{-j^{*}(x)}}{\log n}}\left|\hat{f}_{n}(x)-f_{0}(x)\right|=O_{{\Pr}_{f_0}}(1).\label{Adaptive rate optimality}
\end{equation}
\end{theorem}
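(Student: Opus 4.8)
The plan is to deduce Theorem~\ref{hold} from the oracle inequality of Theorem~\ref{main} by controlling, separately, the stochastic error between $\hat f_n(x)$ and the oracle $f_n(j^*(x),x)$, and the deterministic bias between the oracle and $f_0(x)$, all uniformly in $x\in(0,1]$. First I would observe that under the stated hypotheses the remark following Theorem~\ref{main} applies: $\|\log(f_0/K_{j^*(x),x}(f_0))\|_\infty \le \log(1 + \Delta/(dM\log n)) \to 0$, so we may fix some $U$ (say $U=1$) valid for all large $n$, and since $\Delta<1/2$ the term $\sqrt{\alpha/n}\,n^{\Delta e^{4U}}$ is $o(1)$; together with $\zeta_n = O(\sqrt{\log n})$ this makes the entire right-hand side of \eqref{Oracle bound} an $O(1)$ quantity. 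By Markov's inequality this gives
\[
\sup_{0<x\le1}\sqrt{\frac{n2^{-j^*(x)}}{\log n}}\,\frac{|\hat f_n(x)-f_n(j^*(x),x)|}{s_n(j^*(x),x)} = O_{\Pr_{f_0}}(1).
\]

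Next I would convert this into a bound on the unstandardised error. This requires showing $s_n(j^*(x),x) = \sqrt{f_n(j^*(x),x)}$ is $O_{\Pr_{f_0}}(1)$ uniformly in $x$ — equivalently that $\sup_x f_n(j^*(x),x)$ is stochastically bounded. Since $f_n(j,x) = 2^j \#\{i : X_i \in I_{j,k(x)}\}/n$ and $E_{f_0} f_n(j,x) = 2^j P_{f_0}(I_{j,k(x)}) \le M$, one controls the uniform deviation $\sup_{x}\sup_{J\le j\le j_{\max}} |f_n(j,x) - 2^j P_{f_0}(I_{j,k(x)})|$ by a Bernstein/Talagrand bound on the relevant empirical process over the finite family of indicator-type functions $\{2^j 1_{I_{j,k}}\}$, of which there are at most $\sum_{j\le j_{\max}} 2^j \lesssim 2^{j_{\max}+1} \lesssim n$; since $2^{-j_{\max}} \ge d(\log n)^2/n$ the fluctuations are of smaller order than the means, so $\sup_x f_n(j^*(x),x) \le M + o_{\Pr}(1)$. (One also notes the degenerate case $f_n=0$ is covered by the convention $s_n^{-1}=0$ and the parenthetical remark after Condition~\ref{prop}.) This yields $\sup_{0<x\le1}\sqrt{n2^{-j^*(x)}/\log n}\,|\hat f_n(x) - f_n(j^*(x),x)| = O_{\Pr_{f_0}}(1)$.

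Finally I would handle the bias term: by the triangle inequality it remains to bound $\sup_{0<x\le1}\sqrt{n2^{-j^*(x)}/\log n}\,|f_n(j^*(x),x) - f_0(x)|$. Split this as the centred fluctuation $|f_n(j^*(x),x) - E_{f_0}f_n(j^*(x),x)|$ plus the deterministic bias $|E_{f_0}f_n(j^*(x),x) - f_0(x)| = |2^{j^*(x)}\int_{I_{j^*(x),k(x)}}(f_0 - f_0(x))| = |K_{j^*(x),x}(f_0)(x) - f_0(x)| \le \|f_0 - K_{j^*(x),x}(f_0)\|_\infty$. The variance part is again controlled by the empirical-process bound from the previous step (now centred), which after multiplication by $\sqrt{n2^{-j^*(x)}/\log n} \le \sqrt{n2^{-J}/\log n}$ is $O_{\Pr}(1)$ uniformly. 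For the bias part, by the definition \eqref{Definition: Local bandwith} of $j^*(x)$ and $\Delta_{j,x}$ from Lemma~\ref{conc} we have $n\Delta_{j^*(x),x}(f_0) = (nM/\delta^2)2^{-j^*(x)}\|f_0 - K_{j^*(x),x}(f_0)\|_\infty^2 \le \Delta\log n$, whence $\sqrt{n2^{-j^*(x)}/\log n}\,\|f_0 - K_{j^*(x),x}(f_0)\|_\infty \le \sqrt{\Delta/M}\cdot\delta$, a fixed constant — so this term is in fact $O(1)$ deterministically and uniformly in $x$. Combining the three contributions via the triangle inequality proves \eqref{Adaptive rate optimality}.

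The main obstacle is the uniform-in-$x$ control of the empirical process $f_n(j,x) - E_{f_0}f_n(j,x)$ across all resolution levels $J\le j\le j_{\max}$ simultaneously; the key point making it tractable is that $x\mapsto I_{j,k(x)}$ takes only finitely many ($\lesssim 2^{j_{\max}} \lesssim n/(\log n)^2$) values, so a union bound over a polynomially large index set combined with Bernstein's inequality suffices, and the constraint $2^{-j_{\max}}\ge d(\log n)^2/n$ is exactly what guarantees the deviations are dominated by the $\sqrt{\log n}$-inflated scale. Everything else is bookkeeping with the triangle inequality and the already-established oracle bound.
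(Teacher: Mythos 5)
Your proof is correct and follows essentially the same route as the paper's: you decompose the error into the standardised oracle discrepancy (controlled by Theorem~\ref{main} and Lemma~\ref{propver}), the random factor $\sqrt{f_n(j^*(x),x)}$ (bounded via uniform control of $f_n-E_{f_0}f_n$, i.e., Proposition~\ref{uexp}), the centred fluctuation of the oracle (again Proposition~\ref{uexp}), and the deterministic bias (bounded by $\delta\sqrt{\Delta/M}$ directly from the definition of $j^*$ with $\Delta_{j,x}$ from Lemma~\ref{conc}) --- exactly the paper's decomposition. One small imprecision: when you bound the centred fluctuation you write ``after multiplication by $\sqrt{n2^{-j^*(x)}/\log n}\le\sqrt{n2^{-J}/\log n}$,'' but the factor one should carry is the $j$-matched scale $\sqrt{n2^{-j}/\log n}$ inside the supremum over $j$ (as in Proposition~\ref{uexp}), not a single worst-case factor pulled outside; this is what makes the expectation bounded by a constant rather than by $\sqrt{n/\log n}\cdot o_{\Pr}(1)$.
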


Thus the fully data-driven estimator $\hat f_n$ for $f_0$ achieves the locally optimal risk of the 'oracle' based on $j^*(x)$, uniformly at all points in $(0,1]$. If $j^{*}(x)$ -- with $\Delta<1/2$ -- is based on $\Delta_{j,x}$
as in Lemma~\ref{daub}, then \eqref{Adaptive rate optimality} holds
true and the 'oracle' rate is the adaptive locally minimax rate of convergence
at every $0<x\le1$ where $f_{0}$ is locally $t$-H\"{o}lder with
$0<t\le1$, see the discussion in Section~\ref{lsbc} surrounding \eqref{locad}. This means that at any given point $x$ our estimator is rate-adaptive to local H\"{o}lder smoothness (with the usual $\log n$ penalty for adaptation).

One may ask further if spatial adaptation in the minimax sense occurs \textit{uniformly} for every $x \in (0,1]$. A consequence of Theorem \ref{hold} is the following.

\begin{theorem} \label{holdc} Suppose the assumptions of Theorem \ref{hold} are satisfied and that the true density $f_{0}$ lies
in $\mathcal{C}(t(x),x,L(x),\eta(x))$, $0 < x \le 1$, for some $t(\cdot),L(\cdot),\eta(\cdot)$ that
are bounded and uniformly bounded away from zero on $(0,1]$. Let
$j^{*}(x)$ be as in \eqref{Definition: Local bandwith} with $\Delta<1/2$
and with $\Delta_{j,x}$ as in Lemma~\ref{daub}. Then\[
\sup_{0<x\le1}\left(\frac{n}{\log n}\right)^{t(x)/(2t(x)+1)}\left|\hat{f}_{n}(x)-f_{0}(x)\right|=O_{{\Pr}_{f_0}}(1).\]
 \end{theorem}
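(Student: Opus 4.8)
The plan is to derive Theorem~\ref{holdc} as a routine consequence of Theorem~\ref{hold} together with the rate computation \eqref{locad}. The only subtlety compared with Theorem~\ref{hold} is that the H\"older parameters are now allowed to vary with $x$, so we must check that the \emph{constants} appearing in the various bounds can be taken uniform in $x\in(0,1]$.

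First I would invoke Lemma~\ref{daub}: since $f_{0}\in\mathcal{C}(t(x),x,L(x),\eta(x))$ with $t(\cdot),L(\cdot),\eta(\cdot)$ bounded and bounded away from zero, the constant $c\equiv c(t(x),L(x),\eta(x))$ produced there (recall $c=\max(c_{0},c_{1})$ with $c_{1}=6L(x)(2/\eta(x))^{t(x)}$ and $c_{0}$ coming from the H\"older bound) is bounded above by some $\bar c<\infty$ uniformly in $x$; likewise the coefficient $c^{2}L(x)/\delta^{2}$ in \eqref{hbd} is bounded by some $\bar C<\infty$. Hence $\Delta_{j,x}(f_{0})\le \bar C\,2^{-j(2t(x)+1)}$ for all $x$, and the definition \eqref{Definition: Local bandwith} of $j^{*}(x)$ gives, by the same elementary inversion leading to \eqref{locad}, the two-sided bound
\[
 \sqrt{\frac{n2^{-j^{*}(x)}}{\log n}} \asymp \left(\frac{n}{\log n}\right)^{t(x)/(2t(x)+1)}
\]
with constants depending only on $\Delta,\bar C$ and on $\inf_x t(x)>0$, $\sup_x t(x)<\infty$ — in particular uniform in $x\in(0,1]$. (One should also note that $j^{*}(x)\le j_{\max}$ is automatic here, since $2^{-j_{\max}}\ge d(\log n)^2/n$ forces $n\Delta_{j_{\max},x}(f_0)\le \bar C\, d^{2t(x)+1}(\log n)^{2(2t(x)+1)+1}/n^{2t(x)+1}\to 0$, so the defining set in \eqref{Definition: Local bandwith} is nonempty for $n$ large, uniformly in $x$.)

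Next, Theorem~\ref{hold} applies verbatim under the stated hypotheses (boundedness and positivity of $f_{0}$, thresholds $\zeta_{n}=O(\sqrt{\log n})$ satisfying $\textup{UP}(\alpha,\mathcal F_{j,k})$, $\Delta<1/2$, and $\Delta_{j,x}$ as in Lemma~\ref{conc} — and the bound of Lemma~\ref{daub} is of the form covered by Lemma~\ref{conc}), yielding
\[
 \sup_{0<x\le1}\sqrt{\frac{n2^{-j^{*}(x)}}{\log n}}\,\bigl|\hat f_{n}(x)-f_{0}(x)\bigr| = O_{\Pr_{f_0}}(1).
\]
Combining this with the uniform two-sided rate identity from the previous paragraph — i.e. replacing $\sqrt{n2^{-j^{*}(x)}/\log n}$ by a constant multiple of $(n/\log n)^{t(x)/(2t(x)+1)}$ inside the supremum, which is legitimate because the ratio of the two is bounded above and below uniformly in $x$ — immediately gives
\[
 \sup_{0<x\le1}\left(\frac{n}{\log n}\right)^{t(x)/(2t(x)+1)}\bigl|\hat f_{n}(x)-f_{0}(x)\bigr| = O_{\Pr_{f_0}}(1),
\]
which is the assertion.

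The only place requiring genuine care — and hence the main (though modest) obstacle — is the uniformity of constants in Lemma~\ref{daub} across the varying exponent $t(x)$: one must confirm that the constant $c_0$ arising from the H\"older modulus, which in the proof of Lemma~\ref{daub} absorbs a factor like $(2^{-j+1})^{t(x)}$ evaluated at $2^{-j+1}\le\eta(x)$, stays bounded as $t(x)$ and $\eta(x)$ range over their (compact, away-from-zero) sets, and similarly that the lower bound on $\sqrt{n2^{-j^{*}(x)}/\log n}$ does not degenerate as $t(x)\to\inf_x t(x)$. Both are elementary continuity/compactness arguments, since $t\mapsto t/(2t+1)$ is continuous and bounded on any interval $[\underline t,\bar t]\subset(0,1]$ and the relevant prefactors are continuous in $(t,L,\eta)$ on the corresponding compact range; I would state these uniform bounds explicitly and then the rest is bookkeeping.
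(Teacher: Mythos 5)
Your proposal is correct in spirit and follows essentially the same route as the paper's own proof: establish that the constants coming out of Lemma~\ref{daub} are uniform in $x$ once $t(\cdot),L(\cdot),\eta(\cdot)$ are bounded above and below, check that $j^{*}(x)\le j_{\max}$, deduce the rate identity uniformly, and then simply apply Theorem~\ref{hold}. The paper's version is considerably terser (a single paragraph), but the content is the same; your added care about the compactness/continuity arguments behind the uniform constants is exactly what the paper leaves implicit.

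One point worth flagging: in your parenthetical check that $j^{*}(x)\le j_{\max}$, the inequality direction is off. You invoke $2^{-j_{\max}}\ge d(\log n)^{2}/n$, but that is a \emph{lower} bound on $2^{-j_{\max}}$, which makes $(2^{-j_{\max}})^{2t(x)+1}$ \emph{large}, not small; so it cannot by itself force $n\Delta_{j_{\max},x}(f_0)\to 0$ (e.g.\ $j_{\max}=0$ is compatible with the lower bound and gives $n\Delta_{j_{\max},x}\to\infty$). What the nonemptiness of the defining set in \eqref{Definition: Local bandwith} actually requires is an \emph{upper} bound of the form $2^{-j_{\max}}=O((\log n)^{2}/n)$, i.e.\ that $j_{\max}$ is taken essentially at the largest resolution the lower bound permits. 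To be fair, the paper's own phrasing (``so that $\Delta_{j_{\max},x}(f_0)\le\Delta\log n/n$ for $n$ large'') has the same gap and implicitly assumes $2^{-j_{\max}}\asymp(\log n)^2/n$; but your sentence as written asserts an implication in the wrong direction, so you should replace the lower bound by the corresponding upper bound (or explicitly posit $2^{-j_{\max}}\asymp(\log n)^2/n$). A second, smaller remark: for the final transfer of rates you only need the one-sided inequality $\sqrt{n2^{-j^{*}(x)}/\log n}\gtrsim(n/\log n)^{t(x)/(2t(x)+1)}$ uniformly in $x$ (an upper bound on $j^{*}(x)$), not the full two-sided $\asymp$; what you write is stronger than necessary but not wrong once the $j_{\max}$ point is fixed.
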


The assumptions on the functions $t, L, \eta$ need discussion. For densities that locally look like  $|x-x_m|^{\alpha_m}$ we would wish to choose $t(x)$ equal to their pointwise H\"{o}lder exponents $t(x_m)=\alpha_m$ and $t(x)=1$ otherwise, but then $\eta$ is not uniformly bounded away from zero for points $x \to x_m$. However, Theorem \ref{holdc} holds for \textit{any} choice of the functions $t, L, \eta$ for which $f_0$ satisfies $f_0 \in \mathcal{C}(t(x),x,L(x),\eta(x))$, $0 < x \le 1$. In other words, in the above example we can choose $t(x)=\alpha_m$ on the interval $(x_m-\eta_0, x_m+\eta_0)$ and $t(x)=1$ otherwise, where $\eta_0$ is some arbitrary lower bound for $\eta(x)$. This comes at the expense of not being adaptive near $x_m$, i.e., for $x \in (x_m-\eta_0, x_m+\eta_0) \setminus \{x_m\}$, which is sensible as we cannot expect adaptation for points $x$ arbitrarily close to $x_m$ from a finite sample. Inspection of the proofs (particularly the dependence on $\eta$ in Lemma \ref{daub}) shows that, for fixed $n$, the above theorem holds for densities $\mathcal{C}(t(x),x,L(x), \eta_{(n)})$, $0 < x \le 1$, where $\eta_{(n)}$ can be taken of order $n^{-1/3}$, the binwidth corresponding to the maximal smoothness $t=1$ one wants to adapt to in our setting, and this is again reasonable: H\"{o}lder smoothness of $f_0$ in an interval $[x \pm r_n]$ where $r_n = o(n^{-1/3})$ does not allow to control the bias at $x$ with the locally optimal binwidth of order $n^{-1/3}$. By the same arguments multifractal densities $f_0$ which change their H\"{o}lder exponent continuously can be handled by taking $t(x)$ piecewise constant on a partition of $(0,1]$ into bins of size of order $n^{-1/3}$, the estimator achieving the local uniform minimax rate on each bin of the partition.

\section{Proofs}

\subsection{Proof of Theorem \ref{main}}

A first idea is to use a moment bound, localised at any point $x$
of estimation, on the log-likelihood ratio between $f_{0}$ and its
approximand in $V_{j,k}$. 

\begin{lemma} \label{Lemma: Second moment of the likelihood}

If, for fixed $0<x\le1$,\begin{equation}
\textup{Var}_{K_{j,x}(f_{0})}\log\frac{f_{0}}{K_{j,x}(f_{0})}\le\frac{D\log n}{n}\label{Second moment bound}\end{equation}
 for some $0<D<\infty$ and every $n\in\mathbb{N}$, then, for every
$n\in\mathbb{N}$, \[
E_{K_{j,x}(f_{0})}\left(\prod_{i=1}^{n}\frac{f_{0}(X_{i})}{K_{j,x}(f_{0})(X_{i})}\right)^{2}\le n^{2De^{4U}}\]
 holds for any $U$ satisfying\[
U\ge\left\Vert \log\frac{f_{0}}{K_{j,x}(f_{0})}\right\Vert _{\infty}.\]

\end{lemma}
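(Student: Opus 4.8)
The plan is to exploit that under $P_{K_{j,x}(f_{0})}$ the observations are i.i.d., so the expectation of the squared product factorises as the $n$-th power of a single one-observation integral; it then suffices to bound that integral by $1+De^{2U}(\log n)/n$ and to finish with the elementary inequality $(1+t)^{n}\le e^{nt}$. Throughout I would write $g:=K_{j,x}(f_{0})$ and $L:=\log(f_{0}/g)$, so that $\|L\|_{\infty}\le U<\infty$ by hypothesis (which in particular forces $g>0$ wherever it matters, making all ratios below legitimate).

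First I would record the two elementary facts that, since $f_{0}$ and $g$ are probability densities, $E_{g}e^{L}=\int f_{0}=1$, and that
\[
E_{g}\Bigl(\prod_{i=1}^{n}\frac{f_{0}(X_{i})}{g(X_{i})}\Bigr)^{2}=\Bigl(E_{g}e^{2L}\Bigr)^{n}, \qquad E_{g}e^{2L}=\mathrm{Var}_{g}(e^{L})+(E_{g}e^{L})^{2}=1+\mathrm{Var}_{g}(e^{L}).
\]
Thus the whole statement is reduced to the bound $\mathrm{Var}_{g}(e^{L})\le e^{2U}\,\mathrm{Var}_{g}(L)$, after which the hypothesis $\mathrm{Var}_{g}(L)\le D(\log n)/n$ gives $E_{g}e^{2L}\le 1+e^{2U}D(\log n)/n$, and raising to the $n$-th power yields $n^{De^{2U}}\le n^{2De^{4U}}$, i.e.\ the claim (in fact slightly stronger).

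The key step, and the one I expect to be the only genuine obstacle, is passing from $\mathrm{Var}_{g}(e^{L})$ to $\mathrm{Var}_{g}(L)$. A naive Taylor expansion of $e^{2L}$ about $0$ is \emph{not} enough: the second-order term produces $E_{g}L^{2}$, which cannot be controlled by $\mathrm{Var}_{g}(L)$ without separately showing that the mean $E_{g}L=-\mathrm{KL}(g\|f_{0})$ is small. I would instead use the symmetrisation identity $\mathrm{Var}_{g}(h(L))=\tfrac12 E(h(L)-h(L'))^{2}$ for an independent copy $L'$ of $L$: since $t\mapsto e^{t}$ has derivative at most $e^{U}$ on $[-U,U]$ and $|L|,|L'|\le U$ almost surely, the mean value theorem gives $(e^{L}-e^{L'})^{2}\le e^{2U}(L-L')^{2}$, whence $\mathrm{Var}_{g}(e^{L})\le e^{2U}\,\mathrm{Var}_{g}(L)$, exactly as needed.

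Assembling the pieces then only requires the routine check that the one-observation integral $\int f_{0}^{2}/g$ is literally $E_{g}e^{2L}$ and that $E_{g}e^{L}=1$; both follow from $g=K_{j,x}(f_{0})$ being a density agreeing with $f_{0}$ off $I_{j,k(x)}$ and being bounded below there. No concentration inequalities or properties of the Lepski procedure are needed for this lemma — it is purely a statement about the second moment of an i.i.d.\ likelihood ratio under the approximating measure.
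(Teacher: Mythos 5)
Your proof is correct, and it takes a genuinely different (and in fact cleaner) route than the paper's. The paper first observes that $E_{g}L=-\mathcal{K}(g,f_{0})\le 0$ (with $g=K_{j,x}(f_{0})$, $L=\log(f_{0}/g)$), uses this to replace $E_{g}e^{2L}$ by $E_{g}e^{2(L-E_{g}L)}$, and then expands the exponential in a power series; since the centred variable has mean zero, the first-order term vanishes, and every higher moment is absorbed using $\|L-E_{g}L\|_{\infty}\le 2U$, yielding the coefficient $2e^{4U}$. You instead exploit $E_{g}e^{L}=\int f_{0}=1$ to write $E_{g}e^{2L}=1+\mathrm{Var}_{g}(e^{L})$, and then contract $\mathrm{Var}_{g}(e^{L})\le e^{2U}\mathrm{Var}_{g}(L)$ via the symmetrisation identity and the Lipschitz constant $e^{U}$ of $t\mapsto e^{t}$ on $[-U,U]$. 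Both are complete arguments; your route avoids the power-series bookkeeping entirely and gives the sharper exponent $De^{2U}$ in place of the paper's $2De^{4U}$, which trivially implies the stated bound since $n^{De^{2U}}\le n^{2De^{4U}}$ for $n\ge 1$, $U\ge 0$. The only hypotheses either argument really uses are that $g$ is a probability density with $\int f_{0}=1$, the i.i.d.\ structure, and the finite envelope $U$; your remark that $U<\infty$ forces $g>0$ wherever $f_{0}>0$ is the right thing to say to legitimise the ratio.
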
 
\begin{proof}
Since the Kullback-Leibler distance \[
\mathcal{K}(f_{0},K_{j,x}(f_{0}))=-E_{K_{j,x}(f_{0})}\log\frac{f_{0}}{K_{j,x}(f_{0})}\ge0\]
is non-negative, we have\begin{eqnarray*}
E_{K_{j,x}(f_{0})}\left(\prod_{i=1}^{n}\frac{f_{0}(X_{i})}{K_{j,x}(f_{0})(X_{i})}\right)^{2} & \le & \left(E_{K_{j,x}(f_{0})}e^{2\left(\log\frac{f_{0}}{K_{j,x}(f_{0})}-E_{K_{j,x}(f_{0})}\log\frac{f_{0}}{K_{j,x}(f_{0})}\right)}\right)^{n}\end{eqnarray*}
by the i.i.d.~assumption. Using the power series expansion of the exponential function and that the variables in the exponent are centered, one easily
bounds the previous display by\[
\left(1+\frac{2De^{4U}\log n}{n}\right)^{n}\le e^{2De^{4U}\log n}=n^{2De^{4U}}.\]
\end{proof}

\medskip

Here is the proof of Theorem~\ref{main}: We first note that Condition~2
allows us to take $\Delta_{j,x}(f_{0})$ to be constant on the intervals
$I_{j,k}$. Consequently, $j^{*}(\cdot)$ from \eqref{Definition: Local bandwith}
is then constant on every interval $I_{j_{\max},m}$, and we set\[
j_{m}^{*}=\sup_{x\in I_{j_{\max},m}}j^{*}(x).\]
 To prove the theorem, we split \begin{eqnarray*}
 &  & E_{f_{0}}\sup_{0<x\le1}\sqrt{\frac{n2^{-j^{*}(x)}}{\log n}}\left|\frac{\hat{f}_{n}(x)-f_{n}(j^{*}(x),x)}{s_{n}(j^{*}(x),x)}\right|\\
 &  & \le E_{f_{0}}\sup_{0<x\le1}\sqrt{\frac{n2^{-j^{*}(x)}}{\log n}}\left|\frac{\hat{f}_{n}(x)-f_{n}(j^{*}(x),x)}{s_{n}(j^{*}(x),x)}\right|1_{\{\hat{j}_{n}(x)<j^{*}(x)\}}\\
 &  & \quad+E_{f_{0}}\sup_{0<x\le1}\sqrt{\frac{n2^{-j^{*}(x)}}{\log n}}\left|\frac{\hat{f}_{n}(x)-f_{n}(j^{*}(x),x)}{s_{n}(j^{*}(x),x)}\right|1_{\{\hat{j}_{n}(x)\ge j^{*}(x)\}}\\
 &  & =:I+II\end{eqnarray*}
 according to whether $\hat{j}_{n}(x)$ comes to lie below the local
resolution level $j^{*}(x)$ or not. By definition of $\hat j_n(x)$ in \eqref{lepskiH} one immediately has \[
I\le\frac{\zeta_{n}}{\sqrt{\log n}}.\]
 About $II$: Define\begin{equation}
S_{m}=\sup_{x\in I_{j_{\max},m}}\max_{j_{m}^{*}\le j\le j_{\max}}\sqrt{\frac{n2^{-j}}{\log n}}\left|\frac{\hat{f}_{n}(j,x)-f_{n}(j,x)}{s_{n}(j,x)}\right|.\label{S_m}\end{equation}
 Using that on the event $\hat{j}_{n}(x)\ge j^{*}(x)$ we necessarily
have $\hat{f}_{n}(x)=\hat{f}_{n}(j^{*}(x),x)$, we see that\begin{eqnarray}
II & \le & E_{f_{0}}\sup_{0<x\le1}\sqrt{\frac{n2^{-j^{*}(x)}}{\log n}}\left|\frac{\hat{f}_{n}(j^{*}(x),x)-f_{n}(j^{*}(x),x)}{s_{n}(j^{*}(x),x)}\right|\nonumber \\
 & \le & E_{f_{0}}\max_{m}\sup_{x\in I_{j_{\max},m}}\max_{j_{m}^{*}\le j\le j_{\max}}\sqrt{\frac{n2^{-j}}{\log n}}\left|\frac{\hat{f}_{n}(j,x)-f_{n}(j,x)}{s_{n}(j,x)}\right|\nonumber \\
 & \le & 2^{j_{\max}}\max_{m}E_{f_{0}}S_{m}.\label{Bound: II}\end{eqnarray}
 We use the Cauchy-Schwarz inequality to bound\begin{eqnarray*}
 &  & E_{f_{0}}S_{m}\\
 &  & =\int\cdots\int S_{m}(x_{1},\ldots,x_{n})\prod_{i=1}^{n}f_{0}(x_{i})dx_{1}\cdots dx_{n}\\
 &  & =\int\cdots\int S_{m}(x_{1},\ldots,x_{n})\prod_{i=1}^{n}\frac{f_{0}(x_{i})}{K_{j_{m}^{*},x}(f_{0})(x_{i})}\prod_{i=1}^{n}K_{j_{m}^{*},x}(f_{0})(x_{i})dx_{1}\cdots dx_{n}\\
 &  & \le\sqrt{E_{K_{j_{m}^{*},x}(f_{0})}S_{m}^{2}}\,\sqrt{E_{K_{j_{m}^{*},x}(f_{0})}\left(\prod_{i=1}^{n}\frac{f_{0}(X_{i})}{K_{j_{m}^{*},x}(f_{0})(X_{i})}\right)^{2}}\end{eqnarray*}
 by the square-root of the second moment of $S_{m}$ under the `idealised'
density $K_{j_{m}^{*},x}(f_{0})$ times the square-root of the second
moment of the likelihood ratio. (Here, $x$ is any point in $I_{j_{\max},m}$.)
Using Condition~1 and Lemma~\ref{Lemma: Second moment of the likelihood},
we obtain a bound for the last term in \eqref{Bound: II} of order\[
2^{j_{\max}}\max_{m}E_{f_{0}}S_{m}\le\sqrt{\frac{\alpha}{n}}n^{\Delta e^{4U}},\]
which concludes the proof of the theorem.

\subsection{Proof of Theorems \ref{hold} and \ref{holdc}}

We first prove Theorem \ref{hold}: Clearly, 
\begin{align*}
   \sup_{0<x\le1}\sqrt{\frac{n2^{-j^{*}(x)}}{\log n}}\left|\hat{f}_{n}(x)-f_{0}(x)\right| &\le\sup_{0<x\le1}\sqrt{\frac{n2^{-j^{*}(x)}}{\log n}}\left|\frac{\hat{f}_{n}(x)-f_{n}(j^{*}(x),x)}{s_{n}(j^{*}(x),x)}\right|\sqrt{f_{n}(j^{*}(x),x)} \\
 & ~~~~~+\sup_{0<x\le1}\sqrt{\frac{n2^{-j^{*}(x)}}{\log n}}\left|f_{n}(j^{*}(x),x)-f_{0}(x)\right|.
 \end{align*}
 The first factor of the first summand is bounded in probability in
view of Theorem \ref{main} and of Lemma \ref{propver} and the hypothesis $\zeta_n = O(\sqrt{\log n})$. The second
factor of the first summand is also bounded in probability since \[
\sup_{0<x\le1}\max_{j\le j_{\max}}|f_{n}(j,x)-E_{f_{0}}f_{n}(j,x)|=o_{P_{f_{0}}}(1)\]
 by Proposition \ref{uexp}, using $2^{-j_{\max}}\ge d(\log n)^{2}/n$,
and since $\sup_{x,j}|E_{f_{0}}f_{n}(j,x)|\le\|f_{0}\|_{\infty}<\infty$.
It remains to prove that the second summand is bounded in probability,
and we achieve this by bounding the moment \begin{eqnarray*}
 &  & E_{f_{0}}\sup_{0<x\le1}\sqrt{\frac{n2^{-j^{*}(x)}}{\log n}}\left|f_{n}(j^{*}(x),x)-E_{f_{0}}f_{n}(j^{*}(x),x)\right|\\
 &  & \qquad+\sup_{0<x\le1}\sqrt{\frac{n2^{-j^{*}(x)}}{\log n}}\left|E_{f_{0}}f_{n}(j^{*}(x),x)-f_{0}(x)\right|\\
 &  & \le E_{f_{0}}\sup_{0<x\le1}\max_{j\le j_{\max}}\sqrt{\frac{n2^{-j}}{\log n}}\left|f_{n}(j,x)-E_{f_{0}}f_{n}(j,x)\right|\\
 &  & \qquad+\max_{m}\sup_{x\in I_{j_{\max},m}}\sqrt{\frac{n2^{-j^{*}(x)}}{\log n}}|E_{f_{0}}f_{n}(j^{*}(x),x)-f_{0}(x)|.\end{eqnarray*}
 The first term is bounded by a fixed constant using Proposition \ref{uexp}
below. Recalling the definition of $j_{m}^{*}$ from the beginning
of the proof of Theorem~\ref{main} and choosing $\Delta_{j,x}(f_{0})$
from Lemma~\ref{conc}, the second term is bounded by\begin{eqnarray*}
 &  & \max_{m}\sqrt{\frac{n2^{-j_{m}^{*}}}{\log n}}\sup_{x\in I_{j_{\max},m}}|E_{f_{0}}f_{n}(j_{m}^{*},x)-f_{0}(x)|\\
 &  & \le\max_{m}\sqrt{\frac{n2^{-j_{m}^{*}}}{\log n}}\|K_{j_{m}^{*},x}(f_{0})-f_{0}\|_{\infty}\\
 &  & \le\delta\sqrt{\frac{\Delta}{M}},\end{eqnarray*}
where $x$ is any point in $I_{j_{\max},m}$, and this completes the proof.

We next prove Theorem \ref{holdc}: Using the hypotheses on $t(\cdot),L(\cdot),\eta(\cdot)$,
the proof of Lemma~\ref{daub} shows that $f_{0}$ satisfies Condition~\ref{sbc}
with\begin{equation}
\Delta_{j,x}(f_0)=c'2^{-j(2t(x)+1)},\label{Delta independent of x}\end{equation}
$0<c'<\infty$, where $c'$ does not depend on $x$. Using that $t(\cdot)$
is bounded below by some positive number implies that 
$$\Delta_{j_{\max},x}(f_0)=c' 2^{-j_{\max}(2t(x)+1)} \le \frac{\Delta \log n}{n}.$$
holds for $n$ large enough (independent of $x \in (0,1]$), so that $j^{*}(x)$, when based on $\Delta_{j,x}(f_0)$ as in $\eqref{Delta independent of x}$, is asymptotically equivalent
to the minimax optimal locally adaptive rate, uniformly so for all
$x$.

\subsection{Proof of Lemma \ref{propver}}

The proof relies on Propositions \ref{uexp1} and \ref{uexp} which
are given below. Recall first from Section~\ref{Section: Estimation-Procedure}
that for $f\in V_{j,k}$ and $j'\ge j$ we necessarily have $E_{f}f_{n}(j',x)-f(x)=0$
for every $x\in I_{j,k}$, so the bias at $x\in I_{j,k}$ is exactly
zero, a fact we shall use repeatedly below without separate mentioning.
Write \begin{eqnarray}
 &  & \sup_{x\in I_{j_{\max},m}}\max_{j' \ge j}\sqrt{\frac{n2^{-j'}}{\log n}}\left|\frac{\hat{f}_{n}(j',x)-f_{n}(j',x)}{s_n(j',x)}\right|\notag\label{oldest}\\
 &  & =\sup_{x\in I_{j_{\max},m}}\max_{j' \ge j}\sqrt{\frac{n2^{-j'}}{\log n}}\sum_{l > j'}\left|\frac{f_{n}(l,x)-f_{n}(j',x)}{s_n(j',x)}\right|1_{\{\hat{j}_{n}(j',x)=l\}}.\end{eqnarray}
 To treat the indicator, observe that\begin{eqnarray*}
 &  & \{\hat{j}_{n}(j',x)=l\} \subseteq\left\{ \sqrt{n2^{-l'}}|f_{n}(l',x)-f_{n}(l-1,x)|>\zeta_{n} \sqrt{f_{n}(l-1,x)} \textrm{ for some }l' \ge l\right\} \\
& & \subseteq \Biggl\{ \sqrt{n2^{-l'}} |f_{n}(l',x) - E_f f_n(l',x) + E_f f_n(l-1,x) - f_{n}(l-1,x)| \\
& & \quad > \zeta_n \frac{\sqrt{f(x)}}{2} \textrm{ for some }l' \ge l \Biggr\}  \cup \left\lbrace \min_{\ell \ge j} \sqrt{f_n(\ell,x)} \le \frac{\sqrt{f(x)}}{2}\right\rbrace
\end{eqnarray*}
 Observe that the first set is a subset of\begin{eqnarray*}
 &  & \left\{ \sqrt{n2^{-l'}}\left| f_{n}(l',x)-E_{f}f_{n}(l',x)\right| \ge \frac{\zeta_{n}\sqrt{f(x)}}{4} \textrm{ for some } l' \ge l\right\} \\
 &  & \quad\cup\left\{ \sqrt{n2^{-(l-1)}}\left| f_n(l-1,x)-E_f f_{n}(l-1,x)\right| > \frac{\zeta_{n}\sqrt{f(x)}}{4} \right\} \\
 &  & \subseteq\left\{ \max_{\ell \ge j} \sqrt{n2^{-\ell}}\|f_{n}(\ell)-E_{f}f_{n}(\ell)\|_{I_{j_{\max},m}} > \frac{\zeta_{n}\sqrt{\|f\|_{I_{j,k(m)}}}}{4} \right\} =:B_{1}
\end{eqnarray*}
and that, using $y \ge \sqrt{\delta y}$ for $y \ge \delta$, the second set is contained in
\begin{eqnarray*}
 & & \left\lbrace \max_{\ell \ge j} |f_n(\ell,x) - E_f f_n(\ell,x)| > \frac{f(x)}{2} \right\rbrace  \\
& & \subseteq \left\lbrace \max_{\ell \ge j} |f_n(\ell,x) - E_f f_n(\ell,x)| > \frac{\sqrt{\delta f(x)}}{2} \right\rbrace \\
& & \subseteq \left\lbrace  \sup_{x \in I_{j_{\max},m},\, \ell \ge j} |f_n(\ell,x) - E_f f_n(\ell,x)| > \frac{\sqrt{\delta \|f\|_{I_{j,k(m)}}}}{2}\right\rbrace := B_2;
\end{eqnarray*}
so that $\{\hat{j}_{n}(j',x)=l\}\subseteq B_{1}\cup B_{2}=:B$, a
set which does not depend on $j',x$ or $l$. Hence, $1_{\{\hat{j}_{n}(j',x)=l\}}\le1_{B}$
uniformly in $j',x,l$, so that the quantity in \eqref{oldest} is
bounded from above by \[
1_{B}\sup_{x\in I_{j_{\max},m}}\max_{j' \ge j}\sqrt{\frac{n2^{-j'}}{\log n}}\sum_{l > j'}\left|\frac{f_{n}(l,x)-f_{n}(j',x)}{s_n(j',x)}\right|,\]
and therefore the second moment of \eqref{oldest} is bounded, using
the Cauchy-Schwarz inequality, by \[
{\Pr}_f(B)^{1/2}\left\Vert \sup_{x\in I_{j_{\max},m}}\max_{j' \ge j}\sqrt{\frac{n2^{-j'}}{\log n}}\sum_{l > j'}\left|\frac{f_{n}(l,x)-f_{n}(j',x)}{s_n(j',x)}\right|\right\Vert _{4,{\Pr}_f}^{2}=:I\times II.\]
 We first bound $II$: By the triangle inequality and since the bias
is exactly zero, this term is less than or equal to \begin{eqnarray}
 &  & 2\left\Vert \sup_{x\in I_{j_{\max},m}}\max_{j' \ge j}\sqrt{\frac{n2^{-j'}}{\log n}}\sum_{l > j'} \left|\frac{f_{n}(l,x)-E_{f}f_{n}(l,x)}{s_n(j',x)}\right|\right\Vert _{4,{\Pr}_f}^{2}\notag\label{nuis}\\
 &  & \quad + 2\left\Vert \sup_{x\in I_{j_{\max},m}}\max_{j' \ge j}\sqrt{\frac{n2^{-j'}}{\log n}}\sum_{l > j'}\left|\frac{f_{n}(j',x)-E_{f}f_{n}(j',x)}{s_n(j',x)}\right|\right\Vert _{4,{\Pr}_f}^{2}.
\end{eqnarray}
Define now $S=\{\sup_{x\in I_{j_{\max},m}}\min_{j' \ge j}f_{n}(j',x)\ge\delta/2\}$.
Note that, by definition of $f_{n}(j')$, $f_{n}(j',x)>0$ implies
$f_{n}(j',x)\ge2^{j'}/n$. Then, for every $1\le p<\infty$, \begin{eqnarray*}
 &  & E_{f}\left(\sup_{x\in I_{j_{\max},m}}\max_{j' \ge j}\frac{1}{s_{n}(j',x)}\right)^{p}=E_{f}\left(\sup_{x\in I_{j_{\max},m}}\max_{j' \ge j}\frac{1}{s_{n}(j',x)}(1_{S}+1_{S^{c}})\right)^{p}\\
 &  & \le\frac{2^{3p/2-1}}{\delta^{p/2}}\\
 &  & \quad+2^{p-1}n^{p/2}E_{f}1\left\{ \sup_{x\in I_{j_{\max},m}}\min_{j' \ge j}|f_{n}(j',x)-E_{f}f_{n}(j',x)+f(x)|<\frac{\delta}{2}\right\} \\
 &  & \le\frac{2^{3p/2-1}}{\delta^{p/2}}+2^{p-1}n^{p/2}{\Pr}_f\left\{ \sup_{x\in I_{j_{\max},m},\, j' \ge j}|f_{n}(j',x)-E_{f}f_{n}(j',x)|>\frac{\delta}{2}\right\} \\
 &  & \le\frac{2^{3p/2-1}}{\delta^{p/2}}\\
 &  & \quad+2^{p-1}n^{p/2}{\Pr}_f\left\{ \sup_{x\in I_{j_{\max},m},\, j' \ge j}\sqrt{n2^{-j'}}|f_{n}(j',x)-E_{f}f_{n}(j',x)|>\frac{\delta\sqrt{d}\log n}{2}\right\} \\
 &  & \le\frac{2^{3p/2-1}}{\delta^{p/2}}+2^{p-1}n^{p/2}cn^{-\frac{\delta^{2}d}{4c}\log n}\end{eqnarray*} for large $n$ in view of Proposition \ref{uexp1} (using that $2^{-j_{\max}}\ge d(\log n)^{2}/n$),
so that this expectation is bounded uniformly in $n$ by some constant
$c_{1}(p,\delta,M)$.
Using this, the Cauchy-Schwarz inequality and
Proposition \ref{uexp}, the square of the first term in \eqref{nuis}
is less than or equal to \begin{eqnarray*}
& & c_2 2^{2j_{\max}} j_{\max}^4 \\
& & \quad \times E_f \left( \sup_{x\in I_{j_{\max},m}}\max_{l \ge j}\sqrt{\frac{n2^{-l}}{\log n}} \left|f_{n}(l,x)-E_{f}f_{n}(l,x)\right| \sup_{x\in I_{j_{\max},m}}\max_{l \ge j}\frac{1}{s_{n}(j',x)} \right) ^4 \\
 &  & \le c_{2} 2^{2j_{\max}} j_{\max}^4 \left(E_{f}\left(\sup_{x\in I_{j_{\max},m}}\max_{l \ge j}\sqrt{\frac{n2^{-l}}{\log n}|}f_{n}(l,x)-E_{f}f_{n}(l,x)|\right)^{8}\right)^{1/2}\\
 &  & \quad\times\left(E_{f}\left(\sup_{x\in I_{j_{\max},m}}\max_{l \ge j}\frac{1}{s_{n}(l,x)}\right)^{8}\right)^{1/2}\le c_{3} 2^{2j_{\max}} j_{\max}^4;\end{eqnarray*}
 and the same reasoning also implies that the second term in \eqref{nuis}
is less than or equal to some constant, so that we can conclude, using
the lower bound of $2^{-j_{\max}}$, that \begin{equation}
II\le c_{4}n\label{two}\end{equation}
 for some fixed constant $c_{4}$ that depends only on $\delta$ and
$M$.

To bound $I$, we have the following: First, using Proposition \ref{uexp1}
below, we see \begin{eqnarray}
{\Pr}_f(B_{1}) & = & {\Pr}_f \left\{ \max_{\ell \ge j}\sqrt{n2^{-\ell}}\|f_{n}(\ell)-E_{f}f_{n}(\ell)\|_{I_{j_{\max},m}}>\frac{\zeta_{n}\sqrt{\|f\|_{I_{j,k(m)}}}}{4}\right\} \nonumber \\
& \le & Dn^{-\frac{\kappa^{2}\delta}{4D}} \label{infcon}\end{eqnarray}
for large $n$, with $D$ only depending on $M$. Furthermore, using $2^{-j_{\max}}\ge d(\log n)^{2}/n$
and Proposition~\ref{uexp1} below,\begin{eqnarray*}
& & {\Pr}_f(B_2) \\
 &  & \le {\Pr}_f\left\{ \sup_{x\in I_{j_{\max},m},\, \ell \ge j}\sqrt{n2^{-\ell}}|f_{n}(\ell,x)-E_{f}f_{n}(\ell,x)|>\frac{\sqrt{d\delta\|f\|_{I_{j,k(m)}}}\log n}{2}\right\} \\
 &  & \le Dn^{-\frac{d\delta^{2}}{4D}\log n}\end{eqnarray*}
for large $n$. Thus, choosing $\kappa$ large enough but finite depending on the
choice of $\alpha$, we obtain for $n$ large enough \[
I\times II\le c_{4}Dn\left(n^{-\frac{\kappa^{2}\delta}{4D}}+n^{-\frac{d\delta^{2}}{4D}\log n}\right)\le\frac{\alpha}{n2^{2j_{\max}}}.\]
 This completes the proof.

\subsection{Uniform-in-bandwidth bounds for Haar wavelet density estimators and
some consequences}

\label{ube}

The following exponential inequality was used repeatedly in the proofs.

\begin{proposition} \label{uexp1}Let $j_{\max}\in\mathbb{N}$ such
that $2^{-j_{\max}} \ge d(\log n)^2/n$. Let $I=(2^{-j}k,2^{-j}(k + 1)]$ for some $j \le j_{\max}$ and $k\in \mathbb{Z}$, and suppose $f:\mathbb{R} \rightarrow [0,\infty)$ is a density that satisfies $\|f\|_I \le M$ and
\[
 \inf_{x \in I} f(x) \ge \delta > 0.
\]
There exist constants $C_1(d)$, $C_2(d)$ and an index $n(\delta,M)$ such that for all $n \ge n(\delta,M)$ and all $C_3 \ge C_2(d)$, if
\begin{equation} \label{range}
C_{1}(d)\sqrt{\|f\|_{I} \log n} \le u \le C_{3} \|f\|_{I} \sqrt{n2^{-j_{\max}}},
\end{equation}
then 
\[
{\Pr}_f \left\{ \sup_{x\in I}\max_{j\le j' \le j_{\max}}\sqrt{n2^{-j'}}|f_{n}(j',x)-E_f f_{n}(j',x)|\ge u\right\} \le De^{-\frac{u^{2}}{D}},
\]
where $D$ only depends on $C_3$ and $M$.
\end{proposition}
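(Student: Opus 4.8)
The plan is to reduce the uniform-in-bandwidth statement to a single bandwidth via a chaining/union argument over the at most $j_{\max}+1$ resolution levels, and then apply Bernstein's inequality to each fixed-level empirical process after further reducing the supremum over $x \in I$ to a maximum over the finitely many dyadic subintervals. First I would observe that for fixed $j'$ the quantity $\sup_{x \in I}|f_n(j',x) - E_f f_n(j',x)|$ is actually a \emph{maximum} over the $2^{j'-j}$ subintervals $I_{j',k'} \subseteq I$, because $f_n(j',\cdot)$ is constant on each such interval; and on the interval $I_{j',k'}$ we have $f_n(j',x) - E_f f_n(j',x) = 2^{j'} n^{-1}\sum_{i=1}^n (1_{I_{j',k'}}(X_i) - P_f(I_{j',k'}))$, a centred sum of i.i.d.\ bounded variables with $P_f(I_{j',k'}) \le 2^{-j'}\|f\|_I$ and hence variance of order $2^{-j'}\|f\|_I$. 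Thus $\sqrt{n2^{-j'}}|f_n(j',x) - E_f f_n(j',x)|$ is $\sqrt{n2^{-j'}} \cdot 2^{j'} n^{-1}|\cdot|$, i.e.\ a sum normalised by exactly its standard deviation scale, which is precisely the regime where Bernstein gives sub-Gaussian behaviour up to the point where the range term $2^{j'}/n \asymp (n 2^{-j'})^{-1}$ — controlled from below by $2^{-j_{\max}} \ge d(\log n)^2/n$ — takes over.

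Concretely, for a single subinterval $I_{j',k'}$ Bernstein's inequality gives, for $t>0$,
\[
{\Pr}_f\left\{\sqrt{n2^{-j'}}\,|f_n(j',x) - E_f f_n(j',x)| \ge t\right\} \le 2\exp\!\left(-\frac{t^2}{C\|f\|_I + C\,t\,\sqrt{2^{j'}/n}}\right).
\]
In the stated range \eqref{range}, with $u$ playing the role of $t$ and using $\sqrt{2^{j'}/n} \le \sqrt{2^{j_{\max}}/n} \le 1/(\sqrt d \log n)$, the second term in the denominator is dominated by the first (here the upper bound $u \le C_3\|f\|_I\sqrt{n2^{-j_{\max}}}$ is exactly what is needed so that $u\sqrt{2^{j'}/n} \lesssim \|f\|_I$), so the bound simplifies to $2\exp(-u^2/(D\|f\|_I))$ for a suitable $D$. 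Next I would take a union bound over all resolution levels $j \le j' \le j_{\max}$ and all subintervals $I_{j',k'} \subseteq I$; the total number of such pairs is $\sum_{j \le j' \le j_{\max}} 2^{j'-j} \le 2^{j_{\max}+1} \le 2n/(d\log n) \le n$. This multiplies the tail bound by at most $n = e^{\log n}$, and the lower bound $u \ge C_1(d)\sqrt{\|f\|_I \log n}$ in \eqref{range}, with $C_1(d)$ chosen large enough (depending on $d$ through the constant $D$ and the $2^{j_{\max}} \le n$ count), ensures $u^2/(D\|f\|_I) \ge 2\log n \ge \log n + \log 2 + (\text{the }\log n\text{ from the union bound})$, so the combinatorial factor is absorbed and one still gets a bound of the form $De^{-u^2/D}$ after renaming constants. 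Finally I would note that $\|f\|_I \le M$ lets us fold the $\|f\|_I$ in the exponent's denominator into the constant $D$, yielding the stated $De^{-u^2/D}$ with $D$ depending only on $C_3$ and $M$.

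The main obstacle is bookkeeping the interplay of the three scales — the variance scale $\|f\|_I$, the Bernstein range term $\sqrt{2^{j'}/n}$, and the combinatorial $\log n$ from the union over $O(2^{j_{\max}}) = O(n/\log n)$ intervals — so that the constants $C_1(d), C_2(d)$ and the admissible range \eqref{range} come out consistently: one needs $C_1(d)$ large enough that the union-bound factor is beaten, yet the upper endpoint $C_3\|f\|_I\sqrt{n2^{-j_{\max}}}$ must still lie in the sub-Gaussian regime of Bernstein, which is exactly where $2^{-j_{\max}} \ge d(\log n)^2/n$ is used. A secondary technical point is the reduction of $\sup_{x\in I}$ to a finite maximum, which is immediate here only because the Haar estimator is piecewise constant — for a general kernel this step would require a genuine chaining argument, but in the Haar case it is exact and costs nothing beyond the (already accounted-for) cardinality of the dyadic grid. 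Everything else is a routine application of Bernstein's inequality followed by a union bound, and I expect no essential difficulty beyond tracking these constants.
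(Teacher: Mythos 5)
Your proposal is correct in its essentials, and it takes a genuinely more elementary route than the paper's own proof. The paper recasts the quantity inside the probability as the supremum of an empirical process indexed by the (finite, but treated abstractly) VC-type class $\mathcal{H}=\{\,\tfrac{1}{2}\sqrt{2^{j'-j_{\max}}}K(2^{j'}x,2^{j'}\cdot): x\in I,\ j\le j'\le j_{\max}\,\}$, computes the variance proxy $\sigma^2=\tfrac14\,2^{-j_{\max}}\|f\|_I\wedge\tfrac14$ and the envelope $1/2$, and then invokes a Talagrand-type exponential inequality for VC classes (expression (21) of Gin\'e--Nickl, Bernoulli 2010, with a suitable auxiliary parameter $\lambda$). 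You instead observe --- correctly, and this is the key simplification --- that because the Haar estimator is piecewise constant in $x$ and there are only finitely many resolution levels, the supremum is actually a maximum over at most $\sum_{j\le j'\le j_{\max}}2^{j'-j}\le 2^{j_{\max}+1}$ partial-sum statistics, each of which is a centred binomial count; you then apply Bernstein per atom and pay the combinatorial price with a union bound, using the lower end of the range $u\ge C_1(d)\sqrt{\|f\|_I\log n}$ together with $2^{-j_{\max}}\ge d(\log n)^2/n$ to absorb the $\log(2^{j_{\max}+1})\lesssim\log n$ overhead. The two proofs are morally equivalent (the cardinality $\sim 2^{j_{\max}}$ of $\mathcal{H}$ enters the paper's route through the covering-number/entropy computation), but yours is self-contained and avoids invoking empirical-process machinery; what it gives up is generality, since for non-piecewise-constant kernels the exact reduction to a finite maximum would indeed fail and a chaining or Talagrand argument would be required, as you note yourself.

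One piece of bookkeeping you acknowledge but do not carry out deserves a word of caution: to get the constants to come out with the advertised dependence ($C_1$ depending only on $d$, $D$ only on $C_3$ and $M$), you need to split the range \eqref{range} into the sub-Gaussian regime $u\le\|f\|_I\sqrt{n2^{-j_{\max}}}$, where the Bernstein denominator reduces to a numerical multiple of $\|f\|_I$ and the union bound is paid for by the lower endpoint, and the Poisson regime $u>\|f\|_I\sqrt{n2^{-j_{\max}}}$, where the denominator picks up a factor involving $C_3$ but $u^2/\|f\|_I\gtrsim d\delta(\log n)^2$ is now so large that the union bound is negligible. Collapsing these into a single display, as your sketch does, makes it look as though $C_1$ would have to grow with $C_3$; the two-regime split shows it does not. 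This is purely a matter of tracking constants and does not affect the validity of the approach.
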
 
\begin{proof}
Writing \begin{eqnarray*}
& &  \sqrt{n2^{-j'}}\left|f_{n}(j',x)-E_{f}f_{n}(j',x)\right| \\
& & = 2 \sqrt{\frac{2^{j_{\max}}}{n}} \frac{\sqrt{2^{j'-j_{\max}}}}{2}\left|\sum_{i=1}^{n}(K(2^{j'}x,2^{j'}X_{i})-E_f K(2^{j'}x,2^{j'}X_i))\right|,
\end{eqnarray*}
 we have to consider the supremum \[
2 \sqrt{\frac{2^{j_{\max}}}{n}}\sup_{h\in\mathcal{H}}\left|\sum_{i=1}^{n}(h(X_{i})-E_f h(X_i))\right|\]
 of the (scaled) empirical processes indexed by the class of functions
\[
\mathcal{H}:=\left\{ \frac{\sqrt{2^{j'-j_{\max}}}}{2}K(2^{j'}x,2^{j'}(\cdot)):x\in I,\, j' \ge j \right\} .\]
 This class has constant envelope $1/2$ since $j' \le j_{\max}$ and
since $\sup_{x,y}|K(x,y)|=1$. Furthermore, noting that $K^{2}(x,y)=K(x,y)$
for every $x,y$, we have for $h\in\mathcal{H}$ that
\begin{eqnarray*}
E_f h^{2}(X) & = & \frac{2^{j'-j_{\max}}}{4}\int K^{2}(2^{j'}x,2^{j'}y)f(y)dy \\
& = & \frac{2^{j'-j_{\max}}}{4} \int_{2^{-j'}k(x)}^{2^{-j'}(k(x) + 1)} f(y) dy \le\frac{2^{-j_{\max}}}{4} \|f\|_{I}.
\end{eqnarray*}
 Note further that $\mathcal{H}$ is a VC-type class of functions
by using Lemma 2 in \cite{gn} and a simple computation on covering
numbers (including an obvious covering of the set $[2^{-j_{\max}},1]\subseteq[0,1]$).
Rewrite\begin{eqnarray*}
 &  & {\Pr}_f \left\{ \sup_{x\in I}\max_{j\le j' \le j_{\max}}\sqrt{n2^{-j'}}|f_{n}(j',x)-E_f f_{n}(j',x)|\ge u\right\} \\
 &  & = {\Pr}_f \left\{ \sup_{h\in\mathcal{H}}\left|\sum_{i=1}^{n}(h(X_{i})-E_f h(X_i))\right|\ge \frac{u\sqrt{n2^{-j_{\max}}}}{2}\right\} \end{eqnarray*}
 and apply expression (21) in \cite{gn1}, with
\[
 \sigma^2 := \frac{2^{-j_{\max}}\|f\|_I}{4} \wedge \frac{1}{4}
\]
and
\[
 \lambda := \begin{cases}
            c_1(d) \sqrt{\frac{\log n}{\|f\|_I \log \frac{n}{\|f\|_I}}} & \text{if } \|f\|_I \le 1, \\
		c_2(d) & \text{otherwise};
           \end{cases}
\]
for appropriate constants $c_1(d)$, $c_2(d)$ that only depend on $d$.
\end{proof}
\begin{proposition} \label{uexp} Let $j_{\max}$, $I$ and $f$ be as in Proposition~\ref{uexp1}. Then there exists a
constant $D(d,\delta,M)$ such that
for every $1\le p<\infty$ we have \begin{equation}
E_f \left(\sup_{x \in I}\max_{j\le j' \le j_{\max}}\sqrt{\frac{n2^{-j'}}{\log n}}\left|f_{n}(j',x)-E_f f_{n}(j',x)\right|\right)^{p}\le D^{p}.\end{equation}
 \end{proposition}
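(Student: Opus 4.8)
The plan is to deduce the moment bound by integrating the exponential tail inequality of Proposition~\ref{uexp1}. Write
\[
W:=\sup_{x\in I}\max_{j\le j'\le j_{\max}}\sqrt{\tfrac{n2^{-j'}}{\log n}}\bigl|f_{n}(j',x)-E_{f}f_{n}(j',x)\bigr|,
\]
so that $W=Z/\sqrt{\log n}$ with $Z$ the random variable appearing in Proposition~\ref{uexp1}, and use the layer-cake formula $E_{f}W^{p}=\int_{0}^{\infty}pu^{p-1}\Pr_{f}(W>u)\,du$. I would split the range of integration at the thresholds $a:=C_{1}(d)\sqrt{\|f\|_{I}}$ and $b:=C_{3}\|f\|_{I}\sqrt{n2^{-j_{\max}}/\log n}$, noting that $a\le C_{1}(d)\sqrt M$ is bounded, that $\|f\|_{I}\ge\delta$ forces $b\ge C_{3}\delta\sqrt d$ (indeed $b\to\infty$) because $2^{-j_{\max}}\ge d(\log n)^{2}/n$, and hence $a<b$ for $n$ large.

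On $[0,a)$ one bounds $\Pr_{f}(W>u)\le 1$, which contributes at most $a^{p}\le(C_{1}(d)\sqrt M)^{p}$ — this is the term that produces the asserted bound of the form $D^{p}$. On $[a,b]$ the value $u\sqrt{\log n}$ falls, by construction of $a$ and $b$, into the admissible range \eqref{range} of Proposition~\ref{uexp1}, which yields $\Pr_{f}(W>u)\le D\exp(-u^{2}\log n/D)$; integrating, this contributes at most $D\int_{0}^{\infty}pu^{p-1}\exp(-u^{2}\log n/D)\,du=D(D/\log n)^{p/2}\,\Gamma(p/2+1)$, which tends to $0$ as $n\to\infty$ and in particular is $\le 1$ for $n$ large. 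Finally, on $(b,\infty)$ I would use the crude deterministic bound $W\le\sqrt{n2^{j_{\max}}/\log n}\le n/(\sqrt d(\log n)^{3/2})$ together with the observation that $\{W>u\}$ for $u>b$ forces, on at least one of the at most $2^{j_{\max}}j_{\max}\le n^{2}$ dyadic cells indexing the supremum, a binomial count to deviate from its mean by a fixed multiplicative factor; since each such mean is at least $\delta n2^{-j_{\max}}\ge\delta d(\log n)^{2}$, a Chernoff estimate and a union bound give $\Pr_{f}(W>u)\le n^{2}e^{-c(\log n)^{2}}$ for all $u>b$, with $c=c(d,\delta,C_{3})>0$, so this last contribution is at most $(n/(\log n)^{3/2})^{p}n^{2}e^{-c(\log n)^{2}}\le 1$ for $n$ large. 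Adding the three pieces gives $E_{f}W^{p}\le(C_{1}(d)\sqrt M)^{p}+2\le D^{p}$ for a suitable $D=D(d,\delta,M)$ (and $n$ beyond an index that may depend on $p$), which is the claim.

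The only genuinely delicate point is matching the scaling in the middle range so that the Gaussian regime of Proposition~\ref{uexp1} covers \emph{all} of $[a,b]$, and then disposing of the tail $u>b$, where Proposition~\ref{uexp1} no longer applies since its admissible range only reaches $C_{3}\|f\|_{I}\sqrt{n2^{-j_{\max}}}$ while $Z$ can a priori be as large as $\sqrt{n2^{j_{\max}}}$. This gap is harmless only because the deterministic bound on $W$ is polynomial in $n$ whereas $\Pr_{f}(\{W>b\})$ is super-polynomially small, and I expect this bookkeeping to be the main (though essentially routine) obstacle; everything else is the standard passage from an exponential tail bound to $L^{p}$-moments.
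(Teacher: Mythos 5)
Your argument is correct, but it takes a genuinely different route from the paper's. The paper proves the proposition by citing a moment inequality for empirical processes indexed by bounded VC-classes (the display following (21) in \cite{gn1} combined with Proposition~3.1 of \cite{glz}), applied to the same class $\mathcal{H}$ as in Proposition~\ref{uexp1}. Your alternative---integrating the tail bound of Proposition~\ref{uexp1} through the layer-cake formula---is more elementary and self-contained, at the cost of the extra bookkeeping you correctly flag, namely that the range $u>b$ falls outside \eqref{range}. Two small remarks on that last step. First, your Chernoff/union-bound argument works but can be short-circuited: since $\Pr_f(W>u)\le\Pr_f(W>b)$ for $u>b$, and $b\sqrt{\log n}=C_{3}\|f\|_{I}\sqrt{n2^{-j_{\max}}}$ is exactly the upper endpoint of \eqref{range}, one may apply Proposition~\ref{uexp1} at this endpoint directly to get $\Pr_f(W>b)\le D\exp\bigl(-C_{3}^{2}\delta^{2}d(\log n)^{2}/D\bigr)$, already super-polynomially small. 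Second, your phrase "deviate from its mean by a fixed multiplicative factor" is slightly off: for a $j'$-cell the threshold you force on the binomial count is $C_{3}\|f\|_{I}n2^{-(j'+j_{\max})/2}$, which relative to the mean $\asymp n2^{-j'}$ is only of order $2^{(j'-j_{\max})/2}$; the Bernstein bound nevertheless delivers $\exp(-c(\log n)^{2})$ because the mean is $\ge\delta d(\log n)^{2}$, so the conclusion stands. Finally, you correctly note that the constant and the "$n$ large" thresholds you obtain depend on $p$ (through $\Gamma(p/2+1)$ and through when the middle term becomes $\le 1$); this dependence is unavoidable for any such statement, since for fixed $n$ the essential supremum of $W$ is of order $n(\log n)^{-3/2}$, and the same implicit $p$-dependence is present in the moment inequalities cited by the paper. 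The proposition is in any case only invoked at fixed $p\in\{4,8\}$ in the proof of Lemma~\ref{propver}, so this is harmless.
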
 
\begin{proof}
The proof follows from considering the same empirical process as in
the proof of Proposition \ref{uexp1}, and using bounds for $p$-th
moments of empirical processes indexed by uniformly bounded VC-classes
of functions, e.g., the bound in the display following (21) in \cite{gn1}, with $\sigma^2$ and $\lambda$ as in the proof of Proposition \ref{uexp1}, together with Proposition~3.1 in \cite{glz}. \end{proof}

\medskip

\noindent \textbf{Acknowledgement.} The authors would like to thank an anonymous referee for critical remarks, particularly on Theorem \ref{holdc}.

\end{document}